\newcommand\blfootnote[1]{%
  \begingroup
  \renewcommand\thefootnote{}\footnote{#1}%
  \addtocounter{footnote}{-1}%
  \endgroup
}
\theoremstyle{plain}
\newtheorem{theorem}{Theorem}[section]
\newtheorem{proposition}[theorem]{Proposition}
\newtheorem{lemma}[theorem]{Lemma}
\newtheorem{corollary}[theorem]{Corollary}
\newtheorem*{theorem*}{Theorem}
\theoremstyle{remark}
\newtheorem{remark}[theorem]{Remark}
\theoremstyle{definition}
\newtheorem{example}[theorem]{Example}
\DeclareMathOperator{\spn}{span}
\DeclareMathOperator{\tr}{tr}
\DeclareMathOperator{\covol}{covol}
\DeclareMathOperator{\Tr}{Tr}
\DeclareMathOperator{\id}{id}
\DeclareMathOperator{\Ind}{Ind}
\DeclareMathOperator{\rk}{rank}
\DeclareMathOperator{\SL}{SL}
\DeclareMathOperator{\GL}{GL}
\DeclareMathOperator{\PSL}{PSL}
\DeclareMathOperator{\SO}{SO}
\DeclareMathOperator{\Li}{Li}
\title{Plancherel Measures of Reductive Adelic Groups and von Neumann Dimensions}
\author{Jun Yang}
\date{}
\begin{document}
\maketitle

\blfootnote{2010 {\it Mathematics Subject Classification}. 20G05, 22D25.}

\blfootnote{\Letter~Jun Yang~~\href{mailto:junyang@fas.harvard.edu}{junyang@fas.harvard.edu}}
\blfootnote{Harvard University, Cambridge, MA 02138, USA}
\blfootnote{This work was supported in part by the ARO Grant W911NF-19-1-0302 and the ARO MURI Grant W911NF-20-1-0082.}

\begin{abstract}
Given a number field $F$ and a reductive group $G$ over $F$, 
the unitary dual $\widehat{G(\mathbb{A}_F)}$ of the adelic group $G(\mathbb{A}_F)$ and the Placherel measure $\nu_{G(\mathbb{A}_F)}$ on it can be determined by the Plancherel measure of its local groups $G(F_v)$. 
Given a subset $X\subset \widehat{G(\mathbb{A}_F)}$ of finite Plancherel measure, let $H_X$ be the direct integral of the irreducible representations in $X$. 
Besides a $G(\mathbb{A}_F)$-module and a $G(F)$-module, $H_X$ is also a module over the group von Neumann algebra $\mathcal{L}(G(F))$, hence there is a canonical dimension $\dim_{\mathcal{L}(G(F))}H_X\in [0,\infty)$. 
It is proved that the Plancherel measure of $G(\mathbb{A}_F)$ coincides with the dimension over the algebra $\mathcal{L}(G(F))$: 
\begin{center}
$\dim_{\mathcal{L}(G(F))}H_X=\nu_{G(\mathbb{A}_F)}(X)$,
\end{center}
if $G$ is semisimple, simply connected and $G(\mathbb{A}_F)$ is equipped with the Tamagawa measure. 

\end{abstract}
\tableofcontents

\section{Introduction}

Let $F$ be a number field and $G$ be a reductive algebraic group over $F$.  The unitary irreducible representations of the adelic group $G(\mathbb{A}_F)$ are of great interest as the automorphic representation side of the global Langlands correspondence. 
It is then natural to consider the Plancherel measure on the equivalence classes of these representations, i.e., the unitary dual $\widehat{G(\mathbb{A}_F)}$. 
It can be shown that the dual space  $\widehat{G(\mathbb{A}_F)}$ with the Plancherel measure $\nu_{G(\mathbb{A}_F)}$ is the restricted product of its local dual $\widehat{G(F_v)}$ with respect to the unramified representations (see Theorem \ref{tPm}).  

Meanwhile, the study of the von Neumann algebra $\mathcal{L}(\Gamma)$ of a countable discrete group $\Gamma$ is one of the most challenging topics in operator algebras. 
Very little is known of how $\mathcal{L}(\Gamma)$ depends on the group $\Gamma$. 
Especially, a well known conjecture asks whether $\mathcal{L}(\Gamma_1)\cong \mathcal{L}(\Gamma_2)$ implies $\Gamma_1\cong \Gamma_2$ provided $
\Gamma_1,\Gamma_2$ are free groups or lattices in a real Lie group \cite{J00ten}.  
In this paper, we consider the group von Neumann algebra $\mathcal{L}(G(F))$ of the countable group $G(F)$ defined over the number field $F$.  
The modules over $\mathcal{L}(G(F))$ can be described by a canonical dimension function $\dim_{\mathcal{L}(G(F))}$, which determines the isomorphic class of an $\mathcal{L}(G(F))$-module if $G(F)$ is an infinite conjugacy class (ICC) group (or equivalently, $\mathcal{L}(G(F))$ is a factor of type $\text{II}_1$).  

We show the coincidence of the Plancherel measures $\nu_{G(\mathbb{A}_F)}$ of the adelic group $G(\mathbb{A}_F)$ and the dimensions over the von Neumann algebra $\mathcal{L}(G(F))$ of the group $G(F)$.  
More precisely, let $X$ be a subset of $\widehat{G(\mathbb{A}_F)}$ with a finite Plancherel measure, i.e., $\nu_{G(\mathbb{A}_F)}(X)<\infty$. 
Denoting the underlying Hilbert space of an irreducible representation $\pi$ by $H_{\pi}$, we let $H_X$ be the direct integral of the  spaces $H_{\pi}$'s such that the isomorphism class $[\pi] \in X$, which is  $H_X=\int_X^{\oplus} H_{\pi}d\nu_{G(\mathbb{A}_F)}([\pi])$. 
The following result (Theorem \ref{tmain1}) is proved : 
\begin{theorem*}
Let $G$ be a simply connected semisimple algebraic group defined over a number field $F$. 
Then, for a subset $X$ of $\widehat{G(\mathbb{A}_F)}$ with finite Plancherel measure, we have  
\begin{center}
    $\dim_{\mathcal{L}(G(F))}H_X=\nu_{G(\mathbb{A}_F)}(X)$. 
\end{center}
\end{theorem*}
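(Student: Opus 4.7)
The plan is to combine three classical inputs: the lattice structure of $G(F)\subset G(\mathbb{A}_F)$, the Plancherel decomposition of $L^2(G(\mathbb{A}_F))$, and a general coupling identity between $\mathcal{L}(\Gamma)$-dimensions and Plancherel measures for any lattice $\Gamma$ in a unimodular type I locally compact group. The equality $\dim_{\mathcal{L}(G(F))} H_X = \nu_{G(\mathbb{A}_F)}(X)$ will then follow by specializing the coupling formula, made sharp by the Tamagawa number being one.

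First, I would recall that since $G$ is simply connected and semisimple, the diagonal embedding $G(F)\hookrightarrow G(\mathbb{A}_F)$ is discrete with finite covolume, and with respect to the Tamagawa measure this covolume equals the Tamagawa number $\tau(G)=1$ (Weil's conjecture, proved by Kottwitz, with the remaining $E_8$ case completed by Chernousov). Next, by Theorem \ref{tPm} together with the abstract Plancherel theorem,
\[ L^2(G(\mathbb{A}_F)) \;\cong\; \int^\oplus_{\widehat{G(\mathbb{A}_F)}} H_\pi\otimes\overline{H_\pi}\,d\nu_{G(\mathbb{A}_F)}([\pi]) \]
as a $G(\mathbb{A}_F)$-bimodule, and the spectral slice over $X$ naturally contains $H_X$ as a constituent.

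The central step is the coupling identity: for any lattice $\Gamma$ in a unimodular type I locally compact group $G$ and any measurable $X\subset\widehat{G}$ with $\nu_G(X)<\infty$,
\[ \dim_{\mathcal{L}(\Gamma)} H_X \;=\; \nu_G(X)\cdot\covol(\Gamma). \]
For an atomic $X=\{\pi\}$, i.e.\ $\pi$ a discrete series with formal degree $d_\pi=\nu_G(\{\pi\})$, this reduces to the classical Atiyah--Schmid--Goodman--de la Harpe--Jones--Bekka identity $\dim_{\mathcal{L}(\Gamma)} H_\pi = d_\pi\cdot\covol(\Gamma)$. For general $X$, the strategy is to compute the canonical trace on $\mathcal{L}(\Gamma)$ of the spectral projection cutting out $H_X$ and identify it with the Plancherel integral, using the Plancherel formula $\|\phi\|_{L^2(G)}^2=\int_{\widehat{G}}\|\pi(\phi)\|_{\mathrm{HS}}^2\,d\nu_G$ and the canonical trace identity $\tau_{\mathcal{L}(\Gamma)}(\lambda_\Gamma(\psi))=\psi(e)$ on $\mathcal{L}(\Gamma)$, applied to restrictions of compactly supported test functions from $G$ to $\Gamma$.

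The main obstacle lies in extending the coupling identity from the discrete to the continuous part of the Plancherel spectrum. At an individual $\pi$ in the continuous part there is no formal degree, and the pointwise dimension $\dim_{\mathcal{L}(\Gamma)} H_\pi$ need not be finite; the coupling with $\Gamma$ is only visible after integrating against $d\nu_G$ over a finite-measure slice. Justifying the required Fubini-type interchange between the Plancherel integral and the canonical trace on $\mathcal{L}(\Gamma)$---by monotone approximation of $X$, the additivity of $\dim_{\mathcal{L}(\Gamma)}$ under orthogonal direct integrals, and normality of the trace---is where the bulk of the technical work should lie. Once this is in hand, specializing to $\Gamma=G(F)$ with $\covol(\Gamma)=1$ yields the theorem.
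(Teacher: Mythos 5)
Your proposal follows essentially the same route as the paper: the theorem is obtained by combining the general coupling identity $\dim_{\mathcal{L}(\Gamma)}H_X=\covol(\Gamma)\cdot\nu(X)$ for a lattice $\Gamma$ in a unimodular type I group (the paper's Theorem \ref{tdimmeas}) with Weil's conjecture $\tau(G)=1$ for simply connected semisimple $G$, so that $\covol(G(F))=1$ under the Tamagawa measure. Your sketch of the coupling identity (normality of the trace, the Plancherel formula for Hilbert--Schmidt norms, and reduction to the formal-degree case on atoms) matches in substance the paper's proof via Lemma \ref{ldimsumnorm} and the matrix-coefficient computation, so the approach is correct and not genuinely different.
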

In another word, one can tell the Placherel measure of the adelic group $G(\mathbb{A}_F)$ just by treating the representations as modules over the von Neumann algebra $\mathcal{L}(G(F))$ of the discrete subgroup $G(F)$. 
In Section \ref{sPadele}, we give an explicit description of the Plancherel measure of an adelic group. 
In Section \ref{svndim}, we construct the Hilbert space from the direct integral of irreducible representations of $G(\mathbb{A}_F)$ and discuss its von Neumann over $\mathcal{L}(G(F))$. 
In Section \ref{sprod}, we reach a product formula involving the covolume of a lattice.
An example is also provided. 
In Section \ref{smain}, we prove the theorem above. 

{\it Acknowledgements} 
The author would like to thank V. Jones for encouraging me to work on these problems. 
The author is grateful for the detailed comments from D. Bisch, F. Radulescu, and many fruitful conversations with A. Jaffe, B. Mazur, D. Vogan, and W. Schmid.

\section{The Plancherel measure of an adelic group}\label{sPadele}

Let $G$ be a locally compact Hausdorff group with a Haar measure $\mu$. 
Let $\widehat{G}$ be the unitary dual of $G$, i.e., the set of equivalence classes $[\pi]$ of irreducible unitary representations $(\pi,H_{\pi})$ of $G$. 
For convenience, we will not distinguish a representation $\pi$ and its equivalence class $[\pi]$
In the following sections, we will let $\pi$ denote its equivalence class $[\pi]$ or also a representative in it.

We first give a brief review of type I groups following \cite{DiCalg,Fo2}. 
A unitary representation $(\pi,H)$ of $G$ is called {\it primary} if the von Neumann algebra $\mathcal{A}(\pi)\subset B(H)$ generated by $\{\pi(g)|g\in G\}$ is a factor, i.e., its center $Z(A(\pi))$ are scalar multiples of the identity. 
If $\pi$ is a direct sum of irreducible representations, then $\pi$ is primary if and only if all these irreducible summands are unitary equivalent. 

A group $G$ is called {\it type I} if every primary representation of $G$ is a direct sum of copies of some irreducible representation. 
That is to say, $\mathcal{A}(\pi)$ is a type I factor for every primary representation $\pi$.  
It is known that compact groups, connected semisimple Lie groups, connected real algebraic groups, 
reductive $p$-adic groups, and
also the adelic group $G(\mathbb{A})$ of a connected reductive group $G$ defined over a number field are type I (see \cite{Bnst74,Clz07,Fo2,Kiri76}). 

Let us consider the regular representations of a type I locally compact group $G$ and the Fourier transform. 
Let $\rho,\lambda$ be the right and left regular representation of $G$ on $L^2(G)$:
\begin{center}
$(\rho(g)f)(x)=R_{g}f(x)=f(xg)$, $(\lambda(g)f)(x)=L_{g}f(x)=f(g^{-1}x)$
\end{center}
for $f\in L^2(G)$. 
This gives us the {\it two sided regular representation} $\tau$ of $G\times G$ on $L^2(G)$: 
$(\tau(g,h)f)(x)=R_gL_hf(x)=L_hR_gf(x)=f(h^{-1}xg)$. 
If $f\in L^1(G)$, the {\it Fourier transform} of $f$ is defined to be the measurable field of operators over $\widehat{G}$ given by
\begin{center}
$\hat{f}(\pi)=\int_G f(x)\pi(x^{-1})d\mu(x)$,
\end{center}
with a representative of a equivalence class $[\pi]\in \widehat{G}$.
We have the convolution $\widehat{f_1 * f_2}=\widehat{f_1}\cdot\widehat{f_2}$ \cite{Fo2,Kiri04}.  
Now we let 
\begin{center}
$\mathcal{J}^1=L^1(G)\cap L^2(G)$, $\mathcal{J}^2=\spn\{f*g|f,g\in \mathcal{J}^1\}$. 
\end{center}
For a unitary representation $(\pi,H_{\pi})$ of $G$, we denote its contragradient by $(\overline{\pi},H_{\overline{\pi}})$. 
We let $\Tr$ denote the usual trace defined on the trace-class operators. 

\begin{theorem}[The Plancherel Theorem]\label{tplancherel}
Suppose $G$ is a second-countable, unimodular, type I group. 
There is a measure $\nu$ on $\widehat{G}$, uniquely determined once the Haar measure $\mu$ on $G$ is fixed, with the following properties. 
\begin{enumerate}
    \item The Fourier transform $f\mapsto \hat{f}$ maps $\mathcal{J}^1$ into $\int_{G}^{\oplus}H_{\pi}\otimes H_{\overline{\pi}}d\nu(\pi)$ and it extends to a unitary map from $L^2(G)$ onto  $\int_{G}^{\oplus}H_{\pi}\otimes H_{\overline{\pi}}d\nu(\pi)$ that intertwines the two-sided regular representation $\tau$ with $\int_{G}^{\oplus}{\pi}\otimes {\overline{\pi}}d\nu(\pi)$.
    \item For $f_1,f_2\in \mathcal{J}^1$, one has the Parseval formula
    \begin{center}
        $\int_G f_1(x)\overline{f_2(x)}d\mu(x)=\int_{\widehat{G}}\Tr[\hat{f_1}(\pi)\hat{f_2}(\pi)^*]d\nu(\pi)$. 
    \end{center}
    \item For $f\in \mathcal{J}^2$, one has the Fourier inversion formula
    \begin{center}
        $f(x)=\int_{\widehat{G}}\Tr[{\pi}(x)^*\hat{f}(\pi)]d\nu(\pi)$, $x\in G$. 
    \end{center}
\end{enumerate}
\end{theorem}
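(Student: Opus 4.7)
The plan is to derive the theorem from the central disintegration of the group von Neumann algebras. Consider $\mathcal{M} = \lambda(G)''$ with its commutant $\mathcal{M}' = \rho(G)''$ on $L^2(G)$. Since $G$ is second countable, $L^2(G)$ is separable and $\widehat{G}$ carries a standard Mackey Borel structure; since $G$ is type I, $\mathcal{M}$ is a type I von Neumann algebra. Dixmier's central disintegration theorem then yields a standard measure $\nu$ on $\widehat{G}$ and a unitary
\[
U:\; L^2(G) \;\xrightarrow{\ \sim\ }\; \int_{\widehat{G}}^{\oplus} H_\pi \otimes H_{\overline\pi}\, d\nu(\pi)
\]
that carries $\lambda(g)$ to $\int^\oplus \pi(g)\otimes 1\, d\nu(\pi)$ and $\rho(g)$ to $\int^\oplus 1\otimes \overline{\pi}(g)\, d\nu(\pi)$; the contragradient appears on the second factor because the commutant of the primary summand at $\pi$ is identified with $B(H_{\overline{\pi}})$. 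In particular, $U$ intertwines $\tau$ with $\int^\oplus \pi \otimes \overline{\pi}\, d\nu(\pi)$.

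Next I would identify $U$ with the Fourier transform. For $f \in \mathcal{J}^1$, the field $\hat f(\pi) = \int_G f(x)\pi(x^{-1})\,d\mu(x)$ is measurable on $\widehat{G}$, and the covariance relation $\widehat{L_h R_g f}(\pi) = \pi(g)\hat f(\pi)\overline{\pi}(h)^{-1}$ shows that $f \mapsto \hat f$ intertwines $\tau$ with the same diagonal action on the direct integral. Because any two such intertwiners differ only by a measurable field of scalars on the primary summands and the normalization of $\mu$ fixes this scalar, $f \mapsto \hat f$ coincides with $U$; this establishes (1) and simultaneously the uniqueness of $\nu$ given $\mu$. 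The Parseval identity (2) is then the direct integral form of unitarity, using that the inner product on $H_\pi \otimes H_{\overline{\pi}}$ identifies with the Hilbert-Schmidt pairing $\langle A, B\rangle = \Tr(AB^*)$. For the inversion formula (3), writing $f = \sum_i g_i * h_i$ with $g_i, h_i \in \mathcal{J}^1$ gives $\hat f(\pi) = \sum_i \hat g_i(\pi)\hat h_i(\pi)$, which is trace class fiberwise; applying Parseval to the pair $R_x g_i$ and $h_i^*$ rearranges into the pointwise formula $f(x) = \int_{\widehat G}\Tr[\pi(x)^*\hat f(\pi)]\,d\nu(\pi)$.

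The main obstacle is the disintegration step and its alignment with the unitary dual: the type I hypothesis is used precisely to ensure that the central decomposition of $\mathcal{M}$ is supported on genuine irreducible classes in $\widehat{G}$ rather than on the possibly coarser quasi-dual, and second countability is used to guarantee a standard Borel structure under which a measurable field of representatives $\pi \in [\pi]$ can be chosen. Once the disintegration is established, the measurability of $\pi \mapsto \hat f(\pi)$ and the compatibility of the fiberwise traces with the direct integral structure are routine, and the remaining assertions follow by continuous extension from the dense subspaces $\mathcal{J}^1$ and $\mathcal{J}^2$.
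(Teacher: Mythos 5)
The paper does not actually prove this theorem: it is quoted as a classical result from the cited references of Dixmier and Folland (Dixmier, \emph{$C^{*}$-alg\`ebres}, 18.8.2; Folland, \emph{A Course in Abstract Harmonic Analysis}, 7.44), so there is no internal proof to compare against. Your sketch follows the standard route of those references --- central disintegration of the two-sided regular representation, identification of the quasi-dual with $\widehat{G}$ via the type~I hypothesis and of the Borel structure via second countability, then identification of the disintegration unitary with the Fourier transform --- and the overall architecture is the right one.

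Two issues. First, a bookkeeping error: with the paper's convention $\hat f(\pi)=\int_G f(x)\pi(x^{-1})\,d\mu(x)$ one computes $\widehat{\rho(g)f}(\pi)=\pi(g)\hat f(\pi)$ and $\widehat{\lambda(h)f}(\pi)=\hat f(\pi)\pi(h)^{*}$, so it is $\rho$ that is carried to $\int^{\oplus}\pi\otimes 1\,d\nu$ and $\lambda$ to $\int^{\oplus}1\otimes\overline{\pi}\,d\nu$ (exactly as the paper records after the theorem statement); your assignment is reversed, which makes your subsequent identification of $U$ with $f\mapsto\hat f$ internally inconsistent as written. Second, and more substantively: the step ``any two such intertwiners differ only by a measurable field of scalars on the primary summands and the normalization of $\mu$ fixes this scalar'' is where the entire content of the theorem lives, and as stated it is circular. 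To compare $f\mapsto\hat f$ with $U$ by a Schur-type argument you must already know that $f\mapsto\hat f$ is a densely defined map from $L^{2}(G)$ \emph{into} $\int^{\oplus}H_{\pi}\otimes H_{\overline{\pi}}\,d\nu(\pi)$, i.e.\ that $\hat f(\pi)$ is Hilbert--Schmidt for $\nu$-a.e.\ $\pi$ and that $\pi\mapsto\Tr[\hat f(\pi)\hat f(\pi)^{*}]$ is $\nu$-integrable with the correct total integral; for $f\in\mathcal{J}^{1}$ one only knows a priori that $\hat f(\pi)$ is bounded with $\|\hat f(\pi)\|\leq\|f\|_{1}$. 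That integrability statement is assertion (1) itself, not something available before the comparison. Moreover the scalar ambiguity is not removed by normalizing $\mu$ alone: a measurable field of positive constants $c(\pi)$ on the fibers can be traded against a rescaling of the measure, so $\nu$ and the fiberwise normalization must be fixed simultaneously. The standard way to close this gap is through the theory of traces rather than Schur's lemma: the Plancherel weight $\varphi(\lambda(f)^{*}\lambda(f))=\|f\|_{2}^{2}$ is a faithful normal semifinite trace on $\lambda(G)''$ (unimodularity enters here), its central disintegration yields a measurable field of traces on the type~I factors, each of which is the canonical trace times a positive constant, and folding those constants into the measure \emph{defines} $\nu$ and delivers the Parseval formula on $\mathcal{J}^{1}$ in one stroke. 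Your derivations of (2) from unitarity and of (3) from Parseval applied to translates of the factors of $f=\sum_i g_i*h_i$ are fine once (1) is established this way.
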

Indeed, the Plancherel measure can be determined by either the second or third property described above.  
This theorem also implies the decomposition of right and left regular representation of $G$:
\begin{center}
$\rho\cong \int_{\widehat{G}}^{\oplus}\pi\otimes {\id_{\overline{H_\pi}}}~d\nu(\pi)$, $\lambda\cong \int_{\widehat{G}}^{\oplus}{\id_{H_\pi}}\otimes\overline{\pi}~d\nu(\pi)$. 
\end{center}
The measure $\nu$ above is called the {\it Plancherel measure} on $\widehat{G}$. 
Note if $G$ is abelian, the Plancherel measure is a Haar measure;  
if $G$ is compact, the Plancherel measure is given by $\nu(E)=\sum_{\pi \in E}\dim_{\mathbb{C}}{H_\pi}$ for $E\subset \widehat{G}$. 
The support of $\nu$ is not always all of $\widehat{G}$. 
Indeed, ${\rm supp}(\nu)=\widehat{G}$ if and only if $G$ is amenable (see \cite{DiCalg}).

\begin{remark}\label{rtenHS}
For a Hilbert space $H$, let $H^*$ be its dual space, which is isomorphic to the conjugate space $\overline{H}$.
We may identify the tensor Hilbert space $H\otimes H^*$ with the Hilbert-Schmidt operators on $H$, denoted by $B(H)_{\rm HS}$, which is equipped with the inner product $\langle x,y\rangle_{B(H)_{\rm HS}}=\Tr(xy^*)$.  
Taking an orthonormal basis $\{e_i\}_{i\geq 1}$ of $H$, there is an isometric isomorphism $\Psi\colon B(H)_{\rm HS}\to H\otimes H^*$ given as 
\begin{center}
$\Psi(T)=\sum_{i,j}\langle Te_j,e_i\rangle\cdot e_i\otimes e_j^*$, $T \in B(H)_{\rm HS}$, 
\end{center}
such that $\Tr(TT^*)=\|\Psi(T)\|_{H\otimes H^*}^2$. 
\end{remark}

Now let $F$ be a number field and $G$ be a reductive linear algebraic group defined over $F$.
We let $V$ ($V_f$ and $V_{\infty}$, respectively) denote the set of equivalence classes of places (finite places and infinite places, respectively) of $F$ and $F_v$ denotes the local fields at $v\in V$. 

Let $G_v=G(F_v)$ and  $G(\mathbb{A}_F)=\prod_{v\in V_{\infty}}G(F_v)\times \prod'_{v\in V_f}G(F_v)$, the group over the adele $\mathbb{A}_F$ of $F$. 
For each $v\in V_f$, let $K_v$ be a special open maximal compact subgroup of $G_v$.
Indeed, for almost all $v$, $K_v$ can be taken to be $G(\mathcal{O}_v)$, the integral points of $G_v$. 
For each finite place $v\in V_f$, we fix a Haar measure $\mu_v$ on the local group $G_v=G(F_v)$.
We may assume it is normalized in the sense that $\mu_v(K_v)=1$. 
Let $\nu_v$ be the Planacherel measure on $\widehat{G_v}$ determined by $\mu_v$. 

Denote an irreducible unitary representation of $G_v$ by $(\pi_v,H_{\pi_v})$, which will also stand for the isomorphism class in the unitary dual $\widehat{G_v}$. 
Hence we have $L^2(G_v)=\int_{\widehat{G_v}}H_{\pi_{v}}\otimes H_{\pi_v}^* d\nu_v(\pi_v)$ by the Plancherel Theorem \ref{tplancherel}.
Consider the characteristic function $\chi_v$ of $K_v$, i.e.,
\begin{equation*}
    \chi_v(x)=
    \begin{cases}
      1, & \text{if}\ x\in K_v; \\
      0, & \text{otherwise}. 
    \end{cases}
 \end{equation*}
Observe $\|\chi_v\|_{L^2(G_v)}=\mu_v(K_v)=1$.  

Now we take an irreducible representation $(\pi_v,H_v)$ from $\widehat{G_v}$ and consider the Fourier transform $\widehat{\chi_v}$ of $\chi_{v}$ at $\pi_v$, which is given by
\begin{center}
$\widehat{\chi_v}(\pi_{v})=\int_{x\in G_v}\chi_v(x)\pi_{v}(x^{-1})d\mu_v(x)$. 
\end{center}
Please note $\widehat{\chi_v}(\pi_{v})$ is a Hilbert-Schmidt operator, or equivalently, a vector in $H_{\pi_{v}}\otimes H_{\overline{\pi_{v}}}$ almost everywhere on $\widehat{G_v}$ (see \ref{rtenHS}).  
\begin{lemma}\label{lunivec}
If $\widehat{\chi_v}(\pi_{v})$ is nonzero, we have
$\|\widehat{\chi_v}(\pi_{v})\|_{H_{\pi_{v}}\otimes H_{\overline{\pi_{v}}}}=1$. 
\end{lemma}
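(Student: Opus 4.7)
The plan is to recognize $\widehat{\chi_v}(\pi_v)$ as an orthogonal projection onto the $K_v$-fixed subspace of $H_{\pi_v}$, use the uniqueness of the spherical vector to conclude that this subspace is one-dimensional, and then read off the Hilbert--Schmidt norm using Remark~\ref{rtenHS}.

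First, I would rewrite the defining integral. Since $\chi_v$ is the indicator of $K_v$, we have $\widehat{\chi_v}(\pi_v)=\int_{K_v}\pi_v(x^{-1})\,d\mu_v(x)$. Because $K_v$ is a compact (hence unimodular) subgroup stable under inversion and $\mu_v(K_v)=1$, the change of variable $y=x^{-1}$ converts this to
\[
\widehat{\chi_v}(\pi_v)=P_v:=\int_{K_v}\pi_v(y)\,d\mu_v(y).
\]

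Next I would verify that $P_v$ is the orthogonal projection onto the subspace $H_{\pi_v}^{K_v}$ of $K_v$-invariant vectors. Self-adjointness follows from $\pi_v(y)^*=\pi_v(y^{-1})$ combined with the fact that $y\mapsto y^{-1}$ preserves $\mu_v|_{K_v}$; idempotence $P_v^2=P_v$ follows from Fubini, the left-invariance of $\mu_v$ on $K_v$, and $\mu_v(K_v)=1$; and the identities $\pi_v(k)P_v=P_v=P_v\pi_v(k)$ for $k\in K_v$ identify the image of $P_v$ with $H_{\pi_v}^{K_v}$.

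Assuming $P_v\neq 0$, the representation $\pi_v$ then admits a nonzero $K_v$-fixed vector. The classical Borel--Tits--Casselman uniqueness theorem asserts that for an irreducible admissible representation of a reductive $p$-adic group, the space of vectors fixed by a hyperspecial maximal compact subgroup is at most one-dimensional, so in our situation $\dim H_{\pi_v}^{K_v}=1$. Using the isometric identification of Remark~\ref{rtenHS} between Hilbert--Schmidt operators on $H_{\pi_v}$ and vectors of $H_{\pi_v}\otimes H_{\overline{\pi_v}}$, we then compute
\[
\|\widehat{\chi_v}(\pi_v)\|_{H_{\pi_v}\otimes H_{\overline{\pi_v}}}^2=\Tr(P_vP_v^*)=\Tr(P_v)=\dim H_{\pi_v}^{K_v}=1.
\]

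The main subtlety is the uniqueness step: the paper takes $K_v$ to be a \emph{special} maximal compact, whereas the standard uniqueness statement for the spherical vector is formulated for hyperspecial maximal compacts. For almost all $v$ the choice $K_v=G(\mathcal{O}_v)$ is hyperspecial, which is the case directly relevant to the restricted-product description of $\widehat{G(\mathbb{A}_F)}$; for the remaining finite set of places one should either restrict to hyperspecial $K_v$ or appeal to the appropriate refinement of uniqueness for special maximal compacts.
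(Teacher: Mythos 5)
Your proof is correct and follows essentially the same route as the paper: both identify $\widehat{\chi_v}(\pi_v)$ with the orthogonal projection onto the $K_v$-fixed vectors (the paper does this by decomposing $\pi_v|_{K_v}$ into $K_v$-irreducibles and noting $\int_{K_v}\rho_i$ vanishes on the nontrivial summands, while you verify $P_v=P_v^*=P_v^2$ directly), then invoke the uniqueness of the spherical vector to get a rank-one projection of Hilbert--Schmidt norm $1$. Your closing caveat about special versus hyperspecial $K_v$ is a reasonable point of care, though the at-most-one-dimensionality of $H_{\pi_v}^{K_v}$ does hold for special maximal compacts since $(G_v,K_v)$ is then a Gelfand pair.
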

\begin{proof}
Since the group $K_v$ is compact, the restriction  $\pi_{v}|_{K_v}$ can be written as a direct sum of irreducible subrepresentations of $K_v$, i.e.,
$\pi_{v}|_{K_v}=\bigoplus_{i\in I}\rho_i$. 
Then the Fourier transform of the characteristic function $\chi_v$ can be obtained as follows: 
\begin{equation*}
\begin{aligned}
\widehat{\chi_v}(\pi_{v})&=\int_{G_v}\chi_v(x)\pi_{v}(x^{-1})d\mu_v(x)=\int_{K_v}\pi_{v}(x^{-1})d\mu_v(x)\\
&=\bigoplus_{i\in I}\int_{x\in K_v}\rho_i(x^{-1})d\mu_v(x).
\end{aligned}
\end{equation*}
If $\rho_i$ is a nontrivial irreducible representation, we have $\int_{ K_v}\rho(x^{-1})d\mu_v(x)=0$. 
But for the trivial representation ${\rm 1}_{K_v}$, $\int_{ K_v}{\rm 1}_{K_v}(x^{-1})d\mu_v(x)=1$. 
Therefore $\widehat{\chi_v}(\pi_{v})$ is the multiplicity of ${\rm 1}_{K_v}$ in $\pi_v$.  

It is well-known that an irreducible unitary representation of $G(F_v)$ contains ${\rm 1}_{K_v}$ at most once (which are the unramified ones, see \cite{Flath77} Theorem 2). 
Hence $\widehat{\chi_v}(\pi_{v})$ is $0$ or a projection on $B(H_{\pi_{v}})$ with rank $1$, or equivalently, the zero vector or a unit vector in the tensor space $H_{\pi_{v}}\otimes H_{\overline{\pi_{v}}}$.  
\end{proof}

Let $X_v\subset \widehat{G_v}$ be the subset of unramified representations of $G_v$.
\begin{corollary}\label{cmeassph}
We have $\nu_v(X_v)=1$, the Plancherel measure of unramified representations is $1$. 
\end{corollary}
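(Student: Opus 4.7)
The plan is to apply the Parseval formula from the Plancherel Theorem \ref{tplancherel}(2) directly to the characteristic function $\chi_v$ of $K_v$. Since $K_v$ is compact, $\chi_v$ has compact support and is bounded, so $\chi_v \in L^1(G_v) \cap L^2(G_v) = \mathcal{J}^1$, making it an admissible test function.

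First, I would compute the left-hand side of Parseval, which is
\begin{center}
$\int_{G_v} \chi_v(x)\overline{\chi_v(x)} \, d\mu_v(x) = \mu_v(K_v) = 1$
\end{center}
by the chosen normalization of the local Haar measure. Next, I would rewrite the right-hand side using Remark \ref{rtenHS}: the integrand $\Tr[\widehat{\chi_v}(\pi_v)\widehat{\chi_v}(\pi_v)^*]$ equals the squared Hilbert–Schmidt norm of $\widehat{\chi_v}(\pi_v)$, which under the identification $B(H_{\pi_v})_{\rm HS} \cong H_{\pi_v} \otimes H_{\overline{\pi_v}}$ coincides with $\|\widehat{\chi_v}(\pi_v)\|^2_{H_{\pi_v} \otimes H_{\overline{\pi_v}}}$.

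Now I would invoke Lemma \ref{lunivec}: for each $\pi_v$, either $\widehat{\chi_v}(\pi_v) = 0$, or $\widehat{\chi_v}(\pi_v)$ is a unit vector. Moreover, the proof of that lemma identifies $\widehat{\chi_v}(\pi_v)$ with the projection onto the $K_v$-fixed vectors in $H_{\pi_v}$, so the operator is nonzero precisely when $\pi_v$ is unramified, i.e. $[\pi_v] \in X_v$. Consequently the integrand is the indicator function $\mathbf{1}_{X_v}([\pi_v])$, and the right-hand side of Parseval equals $\nu_v(X_v)$. Combining with the left-hand computation yields $\nu_v(X_v) = 1$.

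The argument is essentially a one-line combination of Parseval and Lemma \ref{lunivec}; there is no substantial obstacle. The only subtle point worth flagging is the measurability of the field $\pi_v \mapsto \widehat{\chi_v}(\pi_v)$ and the fact that the condition "$\pi_v$ contains $\mathbf{1}_{K_v}$" defines a measurable subset of $\widehat{G_v}$, but both are built into the framework of Theorem \ref{tplancherel} once $\chi_v \in \mathcal{J}^1$ is known.
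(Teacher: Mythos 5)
Your proof is correct and follows exactly the paper's own argument: apply the Parseval identity to $\chi_v$, note $\|\chi_v\|_{L^2(G_v)}^2 = \mu_v(K_v) = 1$, and use Lemma \ref{lunivec} to see that the integrand on the dual side is the indicator of $X_v$. The extra remarks on the Hilbert--Schmidt identification and measurability are fine but not needed beyond what the paper already sets up.
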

\begin{proof}
It follows the fact that
\begin{center}
$1=\|\chi_v\|_{L^2(G_v)}^2=\int_{\widehat{G_v}}\|\widehat{\chi_v}(\pi_{v})\|^2d\nu_v(\pi_{v})$, 
\end{center}
where $\|\widehat{\chi_v}(\pi_{v})\|=1$ for unramified representations and $\|\widehat{\chi_v}(\pi_{v})\|=0$ otherwise. 
\end{proof}

\begin{remark}[The restricted product of measure spaces \cite{Blkd77}]
Let $\{(X_i,\mu_i):i\in I\}$ be a set of measure spaces with subsets $Y_i\subset X_i$ such that $\mu_i(Y_i)=1$ for all but finitely many $i\in I$.  
Write $(X_S,\nu_S)=(\prod_{i\in S}X_i)\times (\prod_{i\notin F}Y_i)$ for a finite subset $S\subset I$.  
Note for $S'\subset S$, there is a natural embedding of measure spaces $X_{S'}\subset X_S$.
The {\it restricted product of $\{(X_i,\mu_i):i\in I\}$ with respect to $\{Y_i:i\in I\}$} is defined by
\begin{center}
$\prod'_{i\in I} (X_i,Y_i,\mu_i)=\varinjlim_S X_S=\{(x_i)_{i\in I}\in \prod X_i|x_i\in Y_i\text{~for~almost~all~}i\}$,
\end{center}
which will also be denoted simply by $(X,\nu)$. 
A subset $Z\subset X$ is measurable if and only if $Z\cap X_S$ is measurable for all finite $S$ and its measure is given by $\mu(S)=\sup_{S}\nu_S(Z\cap X_S)$. 
\end{remark}

Now we take the restricted product of the measures $\{\mu_v\}_{v\in V}$ as a Haar measure on the adelic group $G(\mathbb{A})$, denoted by $\mu_{G_{\mathbb{A}}}$, or simply $\mu$. 
Let $\nu_{G(\mathbb{A})}$ be the Plancherel measure of $\widehat{G_{\mathbb{A}}}$ determined by $\mu$.  
\begin{theorem}\label{tPm}
$(\widehat{G_{\mathbb{A}}},\nu_{G(\mathbb{A})})=\prod'_{v\in V}(\widehat{G_v},X_v,\nu_v)$.
\end{theorem}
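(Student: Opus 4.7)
The plan is to exploit the uniqueness of the Plancherel measure once the Haar measure is fixed (Theorem \ref{tplancherel}) together with Flath's tensor product theorem, and verify that the restricted product measure $\nu'=\prod'_v\nu_v$ satisfies the Parseval identity on $L^2(G(\mathbb{A}))$; by uniqueness this forces $\nu_{G(\mathbb{A})}=\nu'$.

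First I would identify the underlying measurable sets. By Flath's theorem \cite{Flath77}, every irreducible unitary $\pi\in\widehat{G(\mathbb{A})}$ admits a unique (up to isomorphism) factorisation $\pi\cong\bigotimes{}'_v\pi_v$ with $\pi_v\in\widehat{G_v}$, and for almost every $v$ the component $\pi_v$ must be unramified, i.e.\ $\pi_v\in X_v$. This yields a natural bijection $\widehat{G(\mathbb{A})}\cong\prod'_v(\widehat{G_v},X_v)$ compatible with the Borel structures. On the right-hand side put the restricted product measure $\nu'$; this is well defined because $\nu_v(X_v)=1$ for every $v$ by Corollary \ref{cmeassph}, so that the finite-cylinder measures $\nu_S=\prod_{v\in S}\nu_v\times\prod_{v\notin S}(\nu_v|_{X_v})$ are consistent as $S$ grows.

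The core step is to verify Parseval for $\nu'$ on a dense family of test functions. Take $f=\bigotimes_v f_v\in\mathcal{J}^1(G(\mathbb{A}))$ with $f_v\in\mathcal{J}^1(G_v)$ for $v\in S$ and $f_v=\chi_v$ for $v\notin S$, where $S\supset V_\infty$ is finite. Under the tensor decomposition $H_\pi\cong\bigotimes_v H_{\pi_v}$ the Fourier transform factorises,
\[
\widehat{f}(\pi)=\bigotimes_v\widehat{f_v}(\pi_v),
\]
as a Hilbert--Schmidt operator, so
\[
\Tr\bigl[\widehat{f}(\pi)\widehat{f}(\pi)^*\bigr]=\prod_v\Tr\bigl[\widehat{f_v}(\pi_v)\widehat{f_v}(\pi_v)^*\bigr].
\]
For $v\notin S$ the factor equals $1$ on $X_v$ and $0$ elsewhere by Lemma \ref{lunivec}, so Fubini on the restricted product gives
\[
\int_{\widehat{G(\mathbb{A})}}\Tr\bigl[\widehat{f}(\pi)\widehat{f}(\pi)^*\bigr]\,d\nu'(\pi)=\prod_v\int_{\widehat{G_v}}\Tr\bigl[\widehat{f_v}(\pi_v)\widehat{f_v}(\pi_v)^*\bigr]\,d\nu_v(\pi_v).
\]
By the local Parseval formula each factor equals $\|f_v\|_{L^2(G_v)}^2$, and since $\mu$ is the restricted product of the $\mu_v$ the product coincides with $\|f\|_{L^2(G(\mathbb{A}))}^2$. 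Polarising and using density of decomposable functions in $L^2(G(\mathbb{A}))$, Parseval holds for all pairs in $\mathcal{J}^1(G(\mathbb{A}))$ with respect to $\nu'$; the uniqueness clause of Theorem \ref{tplancherel} then gives $\nu_{G(\mathbb{A})}=\nu'$.

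\emph{The main obstacle} is the measurable identification of the global direct integral $\int^{\oplus}_{\widehat{G(\mathbb{A})}}H_\pi\otimes H_{\overline{\pi}}\,d\nu(\pi)$ with the restricted tensor product of the local ones, including the choice of a measurable field of spherical vectors at almost every place, so that the factorisation $\widehat{f}(\pi)=\bigotimes_v\widehat{f_v}(\pi_v)$ is licit on a full-measure set. Once this restricted-tensor-product structure is in place (which is where Flath's theorem and the type I hypothesis on $G(\mathbb{A})$ do the real work), the rest of the argument is Fubini and a density argument built from the local Plancherel formula at each $v$.
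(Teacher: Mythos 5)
Your proposal is correct and follows essentially the same route as the paper: both arguments rest on Flath's factorisation of $\pi\cong\bigotimes'_v\pi_v$, on Lemma \ref{lunivec} and Corollary \ref{cmeassph} to handle the places outside the finite set $S$, on factorisable test functions equal to $\chi_v$ at almost every place, and on the uniqueness clause of Theorem \ref{tplancherel} together with a density argument. The only real difference is which characterising property you verify: the paper evaluates the Fourier inversion formula pointwise for $h\in\mathcal{J}^2$ (which forces it to track the finite exceptional set $S_x=\{v\mid x_v\notin K_v\}$), whereas you verify the Parseval identity for $f\in\mathcal{J}^1$ and then polarise, which avoids that bookkeeping; since the paper itself notes that either the second or the third property determines the Plancherel measure, this is a legitimate and arguably slightly cleaner variant of the same argument. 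Your closing remark correctly isolates the one input you are taking on faith --- the measurable identification $L^2(G_{\mathbb{A}})\cong\bigotimes'_v(L^2(G_v),\chi_v)$ and the induced factorisation $\widehat{f}(\pi)=\bigotimes_v\widehat{f_v}(\pi_v)$ on a full-measure set --- which the paper supplies by citing \cite{Blkd77}, Corollary 2.3.
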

\begin{proof}
We have $L^2(G_{\mathbb{A}},\mu_{\mathbb{A}})=\bigotimes'_{v\in V}(L^2(G_v),\chi_v)$ by \cite{Blkd77} Corollary 2.3, where the restriced product is taken with respect to the unit vectors $\widehat{\chi_v}$.  
This can be rewritten as 
\begin{center}
    $L^2(G_{\mathbb{A}})=\bigotimes'_v\left(\int_{\widehat{G_v}}H_{\pi_{v}}\otimes H_{\overline{\pi_{v}}}d\nu_v(\pi_{v})\right)$. 
\end{center}
As shown in \cite{Flath77}, an irreducible representation $\pi$ of $G(\mathbb{A})$ can be written as $\pi\cong \bigotimes \pi_{v}$, where each $\pi_v$ is an irreducible unitary representation of $G_v$. 
Almost all $\pi_v$ are unramified, i.e., $\pi_v\in X_v$ for all but finitely many $v$.  

Take a finite set $S\subset V$ and consider the function $h_S=(\otimes_{v\in S}~h_v)\otimes (\otimes_{v\in V\backslash S}~\chi_{v})$ with $h_v\in \mathcal{J}^2_{G_v}$. 
For any $x=(x_v)_{v\in V}\in G_{\mathbb{A}}$, we let $S_x=\{v \in V|x_v\notin K_v\}$, the finite set of the places $v$ whose local coordinate $x_v$ is outside of the compact group $K_v$.
Then the Fourier inversion formula of $h$ at $x$ is given by
{\small
\begin{equation}\label{eadeleplan1}
\begin{aligned}
h((x_v)_{v\in V})
=&\int_{\widehat{G_{\mathbb{A}}}}\Tr(\widehat{h}(\pi))\pi((x_v)_{v\in V})^{*}d\nu_{\mathbb{A}}(\pi)\\
=&\int_{\widehat{G_{\mathbb{A}}}}\left(\prod_{v\in S}\Tr(\widehat{h_v}(\pi_v))\pi_v(x_v)^{*})\cdot \prod_{v\in V\backslash S}\Tr(\widehat{\chi_{K_v}}(\pi_v)\pi_v(x_v)^{*})\right)d\nu_{\mathbb{A}}(\otimes \pi_v)\\
=&\int_{\widehat{G_{\mathbb{A}}}}\left(\prod_{v\in S}\Tr(\widehat{h_v}(\pi_v))\pi_v(x_v)^{*})\cdot \prod_{v\in S_x\backslash S}\Tr(\widehat{\chi_{K_v}}(\pi_v)\pi_v(x_v)^{*})\right)\cdot \\
&~~~~~~\left(\prod_{v\in V\backslash (S\cup S_x)}\delta_{X_v}(\pi_v)\right)d\nu_{\mathbb{A}}(\otimes'\pi_v)
\end{aligned}
\end{equation}
}

Observing that for $x_v\in K_v$, $\widehat{\chi_{v}}(\pi_v)\circ\pi_v(x_v)=\int_{K_v}\pi_v(y^{-1}x_v)d\mu_v(y)$ depends only on $\pi_v$,  
we have $\delta_{X_v}(\pi_v)=1$ or $0$ depends on whether $\pi_v$ is unramified or not. 
On the other hand, we have
{\small
\begin{equation}\label{eadeleplan2}
\begin{aligned}
h((x_v)_{v\in V})
=&\prod_{v\in S}h_v(x_v)\cdot \prod_{v\in V\backslash S}\chi_{K_v}(x_v)\\
=&\prod_{v\in S}\int_{\widehat{G_v}}\Tr(\widehat{h_v}(\pi_v)\pi_v(x_v)^{*})d\nu_v(\pi_v)\cdot \prod_{v\in V\backslash S}\int_{\widehat{G_v}}\Tr(\widehat{\chi_{K_v}}(\pi_v)\pi_v(x_v)^{*})d\nu_v(\pi_v)\\
=&\prod_{v\in S}\int_{\widehat{G_v}}\Tr(\widehat{h_v}(\pi_v)\pi_v(x_v)^{*})d\nu_v(\pi_v)\cdot \prod_{v\in S_x\backslash S}\int_{\widehat{G_v}}\Tr(\widehat{\chi_{K_v}}(\pi_v)\pi_v(x_v)^{*})d\nu_v(\pi_v)\\
&~~~~~~\cdot \prod_{v\in V\backslash (S\cup S_x)}\int_{\widehat{G_v}}\delta_{X_v}(\pi_v)d\nu_v(\pi_v),\\
\end{aligned}
\end{equation}
}
where the last product is $1$ by Corollary \ref{cmeassph}. 

Observe the Plancherel measure is uniquely determined by the Fourier inversion formula.
The functions of the same form as $h$ above span a dense subset of the $L^2$-space. 
Comparing the equation (\ref{eadeleplan1}) with (\ref{eadeleplan2}), we conclude $(\widehat{G_{\mathbb{A}}},\nu_{\mathbb{A}})=\prod'(\widehat{G_v},X_v,\nu_v)$. 
\end{proof}

Note an irreducible unitary representation $\pi=\otimes \pi_v$ is a discrete series representation (an irreducible direct summand in the $L^2$-space) if and only if $\nu_{\mathbb{A}}(\{\pi\})>0$. 
But $\nu_v(\pi_v)=0$ for almost all $v$. 
Hence $\nu_{\mathbb{A}}(\pi)=\prod'\nu_{p}(\pi_v)=0$ and we obtain the following known result: 
\begin{corollary}
$G(\mathbb{A})$ has no discrete series representations. 
\end{corollary}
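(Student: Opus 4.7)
The plan is to deduce the corollary directly from Theorem \ref{tPm}, together with two auxiliary observations: discrete series representations of $G(\mathbb{A})$ correspond to atoms of the Plancherel measure, and irreducible unitary representations of $G(\mathbb{A})$ factor as restricted tensor products.

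First I would note that by the direct integral decomposition of the two-sided regular representation in Theorem \ref{tplancherel}, an irreducible $\pi$ embeds as a direct summand of $L^2(G(\mathbb{A}))$ if and only if $\{\pi\}$ is an atom of $\nu_{G(\mathbb{A})}$. So the corollary reduces to showing $\nu_{G(\mathbb{A})}(\{\pi\}) = 0$ for every $\pi \in \widehat{G(\mathbb{A})}$.

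Second, by Flath's theorem (as already used in the proof of Theorem \ref{tPm}), any such $\pi$ decomposes as $\pi \cong \otimes'_v \pi_v$ with $\pi_v \in \widehat{G_v}$ and $\pi_v \in X_v$ for all but finitely many $v$. Applied to the singleton $\{\pi\}$, the restricted product description of Theorem \ref{tPm} gives
\[
\nu_{G(\mathbb{A})}(\{\pi\}) = \lim_{S} \prod_{v \in S} \nu_v(\{\pi_v\}),
\]
where $S$ ranges over the directed system of finite subsets of $V$ containing the ramified places; the tail factors are bounded above by $\nu_v(X_v)=1$ by Corollary \ref{cmeassph}, so the limit is well defined.

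The remaining input is that $\nu_v(\{\pi_v\}) = 0$ for at least one—indeed for almost every—place $v$. For a finite place $v$ where $\pi_v$ is unramified, this is where local spectral theory enters: the Plancherel measure on the unramified tempered dual of $G_v$ is given by the Macdonald–Plancherel density, which is absolutely continuous on the spherical dual torus and therefore atomless. A single vanishing factor forces the infinite product, and hence $\nu_{G(\mathbb{A})}(\{\pi\})$, to vanish. The main obstacle is precisely this no-atoms claim for the unramified local spectrum; it is an input from spherical harmonic analysis on reductive $p$-adic groups (Satake parameters together with Macdonald's explicit formula) rather than anything internal to the excerpt. Once it is granted, the corollary is immediate.
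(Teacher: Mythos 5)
Your proof is correct and follows essentially the same route as the paper: factor $\pi\cong\otimes'_v\pi_v$, apply the restricted-product description of $\nu_{G(\mathbb{A})}$ from Theorem \ref{tPm}, and observe that the local singletons $\{\pi_v\}$ have Plancherel measure zero so the product vanishes. The only difference is that you explicitly identify and source the key local input (atomlessness of the unramified local spectrum via Macdonald's formula), which the paper simply asserts as ``$\nu_v(\pi_v)=0$ for almost all $v$.''
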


\section{The von Neumann dimensions of direct integrals}\label{svndim}

Let $\Gamma$ be a countable group with the counting measure.  
Let $\{\delta_{\gamma}\}_{\gamma\in \Gamma}$ be the usual orthonormal basis of $l^2(\Gamma)$. 
We also let $\lambda$ and $\rho$ be the left and right regular representations of $\Gamma$ on $l^2(\Gamma)$ respectively.
For all $\gamma,\gamma'\in \Gamma$, we have
$\lambda(\gamma')\delta_{\gamma}=\delta_{\gamma'\gamma}$ and $\rho(\gamma')\delta_{\gamma}=\delta_{\gamma\gamma'^{-1}}$. 
Let $\mathcal{L}(\Gamma)$ be the strong operator closure of the complex linear span of $\lambda(\gamma)$'s (or equivalently,  $\rho(\gamma)$'s). 
This is the {\it group von Neumann algebra of $\Gamma$}. 
There is a canonical faithful normal tracial state $\tau_{\Gamma}$, or simply $\tau$, on $\mathcal{L}(\Gamma)$, which is given by
\begin{center}
$\tau(x)=\langle x\delta_e,\delta_e\rangle_{l^2(\Gamma)}$, $x\in \mathcal{L}(\Gamma)$. 
\end{center}
Hence $\mathcal{L}(\Gamma)$ is a finite von Neumann algebra (hence it is of type $\text{I}$ or $\text{II}_1$).  

More generally, for a tracial von Neumann algebra $M$ with the trace $\tau$, we consider the GNS representation of $M$ on the Hilbert space constructed from the completion of $M$ with respect to the inner product $\langle x,y\rangle_{\tau}=\tau(xy^*)$. 
The underlying space will be denoted by $L^2(M,\tau)$, or simply $L^2(M)$.
 
Consider a normal unital representation $\pi\colon M\to B(H)$ with both $M$ and $H$ separable.  
There exists an isometry $u\colon H\to L^2(M)\otimes l^2(\mathbb{N})$, which commutes with the actions of $M$:
\begin{center}
$u\circ\pi(x)=(\lambda(x)\otimes\id_{l^2(\mathbb{N})} )\circ u$, $\forall x\in M$,
\end{center}
where $\lambda\colon M\mapsto L^2(M)$ denotes the left action. 
Then $p=uu^*$ is a projection in $B(L^2(M)\otimes l^2(\mathbb{N}))$ such that $H\cong p( L^2(M)\otimes l^2(\mathbb{N}))$. 
We have the following result (see \cite{APintrII1} Chapter 8). 

\begin{proposition}\label{ptrdim}
The correspondence $H\mapsto p$ above defines a bijection between the set of equivalence classes of left $M$-modules and the set of equivalence classes of projections in $(M'\cap B(L^2(M)))\otimes B(l^2(\mathbb{N}))$. 
\end{proposition}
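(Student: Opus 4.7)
The plan is to produce mutually inverse maps between the two sets of equivalence classes: on one hand, the map $H\mapsto p=uu^*$ described in the statement; on the other hand, the map sending a projection $q\in (M'\cap B(L^2(M)))\otimes B(l^2(\mathbb{N}))$ to the left $M$-module $q(L^2(M)\otimes l^2(\mathbb{N}))$. The underlying fact that makes this work is that $L^2(M)\otimes l^2(\mathbb{N})$ is a \emph{universal separable} $M$-module, and that the commutant of $\lambda(M)\otimes \id$ on $L^2(M)\otimes l^2(\mathbb{N})$ is exactly $(M'\cap B(L^2(M)))\otimes B(l^2(\mathbb{N}))$ by the commutation theorem for tensor products.

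First I would establish the existence of the $M$-linear isometry $u\colon H\to L^2(M)\otimes l^2(\mathbb{N})$. Decompose $H$ as a (countable) orthogonal direct sum of cyclic $M$-submodules $H=\bigoplus_{n\geq 1}\overline{M\xi_n}$ using Zorn's lemma and separability. For each cyclic module, the associated vector state $x\mapsto \langle \pi(x)\xi_n,\xi_n\rangle$ is a normal positive functional on $M$; using the faithful trace $\tau$ and the identification of $L^2(M)$ with the standard form, one realizes this state by a vector in $L^2(M)$ and obtains an $M$-linear isometric embedding of $\overline{M\xi_n}$ into $L^2(M)$. Summing these over $n$ and tensoring with the standard basis of $l^2(\mathbb{N})$ yields $u$. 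Setting $p=uu^*$, the intertwining relation $u\circ\pi(x)=(\lambda(x)\otimes\id)\circ u$ forces $p$ to commute with $\lambda(M)\otimes\id$, and so $p$ lies in $(M'\cap B(L^2(M)))\otimes B(l^2(\mathbb{N}))$ as required.

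Next I would verify that the assignment descends to equivalence classes and is a bijection. If $u_1,u_2$ are two such isometries for the same $H$, then $v=u_2u_1^*$ is a partial isometry in the commutant with $v^*v=u_1u_1^*$ and $vv^*=u_2u_2^*$, so $p$ is well-defined up to Murray--von Neumann equivalence in $(M'\cap B(L^2(M)))\otimes B(l^2(\mathbb{N}))$; the same argument handles isomorphic $H$'s. For surjectivity, given a projection $q$ in this algebra, the subspace $q(L^2(M)\otimes l^2(\mathbb{N}))$ is invariant under $\lambda(M)\otimes\id$ (since $q$ commutes with it), so it carries a normal $M$-action, and the inclusion $u$ is an $M$-linear isometry with $uu^*=q$. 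For injectivity, a partial isometry $w$ in the commutant realizing equivalence $p_1\sim p_2$ restricts to an $M$-linear unitary between $p_1(L^2(M)\otimes l^2(\mathbb{N}))$ and $p_2(L^2(M)\otimes l^2(\mathbb{N}))$, which is the desired $M$-module isomorphism.

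The main obstacle is the first step: establishing that every separable normal $M$-module embeds $M$-linearly and isometrically into $L^2(M)\otimes l^2(\mathbb{N})$. This is where the tracial structure is essential, because it is the trace that lets one represent arbitrary normal states by vectors in $L^2(M)$ (so that cyclic modules embed), and it is the amplification by $l^2(\mathbb{N})$ that accommodates modules of arbitrary multiplicity. Once this universal embedding is in hand, the remainder of the argument reduces to the routine calculus of partial isometries in the commutant.
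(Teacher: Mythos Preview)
The paper does not supply its own proof of this proposition; it merely records the statement with a reference to \cite{APintrII1}, Chapter~8. Your proposal is correct and is exactly the standard argument found in that source: realize an arbitrary separable normal $M$-module inside $L^2(M)\otimes l^2(\mathbb{N})$ by decomposing into cyclic pieces and using the trace to represent each cyclic vector state as a vector state on $L^2(M)$, then translate (iso)morphisms of modules into Murray--von Neumann equivalences of the range projections via partial isometries in the commutant $(M'\cap B(L^2(M)))\otimes B(l^2(\mathbb{N}))$. There is thus nothing to compare against in the paper itself, and your sketch needs no correction.
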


The {\it von Neumann dimension} of the $M$-module $H$ are defined to be $(\tau\otimes \Tr)(p)$ and denoted by $\dim_M(H)$, which takes its value in $[0,\infty]$. 
We have: 
\begin{enumerate}
    \item $\dim_M(\oplus_i H_i)=\sum_i \dim_M(H_i)$. 
    \item $\dim_M(L^2(M))=1$.
\end{enumerate} 
Note $\dim_M(H)$ depends on the trace $\tau$. 
If $M$ is a finite factor, i.e., $Z(M)\cong\mathbb{C}$, there is a unique normal tracial state (see \cite{J83,MvN36}) and we further have: 
\begin{enumerate}
\setcounter{enumi}{2}
    \item $\dim_M(H)=\dim_M(H')$ if and only if $H$ and $H'$ are isomorphic as $M$-modules (provided $M$ is a factor).  
\end{enumerate}
When $M$ is not a factor, there is a $Z(M)$-valued trace which determines the isomorphism class of an $M$-module (see \cite{Bek04}). 

In the following sections, we will consider the group von Neumann algebra $\mathcal{L}(\Gamma)$ with the canonical trace $tr(x)=\langle x\delta_e,\delta_e \rangle$. 
Hence the von Neumann dimension of $\mathcal{L}(\Gamma)$ is the one uniquely determined by this trace. 
Note a discrete group $\Gamma$ is called an infinite conjugacy class (ICC) group if every nontrivial conjugacy class $C_{\gamma}=\{g\gamma g^{-1}|g\in \Gamma\}$, $\gamma\neq e$, is infinite. 
It is well-known that $\mathcal{L}(\Gamma)$ is a $\rm{II}_1$ factor if and only if $\Gamma$ is a nontrivial ICC group.

Now we consider the case that $\Gamma$ is a discrete subgroup of a locally compact unimodular type I group $G$.  
Let $\mu$ be a Haar measure of $G$. A measurable set $D\subset G$ is called a {\it fundamental domain} for $\Gamma$ if $D$ satisfies $\mu(G\backslash \cup_{\gamma\in\Gamma}\gamma D)=0$ and 
$\mu(\gamma_1 D\cap \gamma_2 D)=0$ if $\gamma_1\neq \gamma_2$ in $\Gamma$. 
In this section, we always assume $\Gamma$ is a lattice, i.e., $\mu(D)<\infty$.
The measure $\mu(D)$ is called {\it covolume} of $\Gamma$ and will be denoted by $\covol(\Gamma)$. 
Note the covolume depends on the Haar measure $\mu$ (see Remark \ref{rcovol}). 

There is a natural isomorphism $L^2(G)\cong l^2(\Gamma)\otimes L^2(D,\mu)$ given by
\begin{center}
$\phi\mapsto \sum_{\gamma\in\Gamma}\delta_{\gamma}\otimes \phi_{\gamma}$ with $\phi_{\gamma}(z)=\phi(\gamma\cdot z)$,
\end{center}
where $z\in D$ and $\gamma\in \Gamma$. 
The restriction representation $\rho_G|_{\Gamma}$ of $\Gamma$ is the tensor product of $\rho_{\Gamma}$ on $l^2(\Gamma)$ and the identity operator $\id$ on $L^2(D,\mu)$. 
Hence the von Neumann algebra $\rho_G(\Gamma)''\cong \mathcal{L}(\Gamma)\otimes \mathbb{C}=\mathcal{L}(\Gamma)$, which will be denoted by $M$ throughout this section. 
Please note $L^2(M)=l^2(\Gamma)$. 


Suppose $X$ is a measurable subset of 
$\widehat{G}$ with the Plancherel measure $\nu(X)<\infty$. 
Define
\begin{center}
$H_X=\int_{X}^{\oplus}H_{\pi}d\nu(\pi)$,   
\end{center} 
which is the direct integral of the spaces $H_{\pi}$ with $\pi\in X$.  
Suppose $\{e_k(\pi)\}_{k\geq 1}$ is an orthonormal basis of its underlying Hilbert space $H_{\pi}$. 
We have the following natural $G$-equivariant isometric isomorphism from $H_X$ to a subspace of $L^2(G)$: 
\begin{equation}\label{eHXsub}
\begin{aligned}
H_X~~~~~~~~~&\cong ~~~\int_{X}^{\oplus}H_{\pi}\otimes e_1(\pi)^*d\nu(\pi)\\
v=\int_{X}^{\oplus}v(\pi)d\nu(\pi)~~&\mapsto~~~ \int_{X}v(\pi)\otimes e_1(\pi)d\nu(\pi),
\end{aligned}
\end{equation}
Therefore we will not distinguish these two spaces and denote them both by $H_X$. 

Consider the projection $P_X\colon L^2(G)\to H_X$ defined on a dense subspace of $L^2(G)$ as follows:  
\begin{center}
{\small
$\int_{\widehat{G}}^{\oplus}\left(\sum\limits_{i,j\geq 1}a_{i,j}(\pi)e_j(\pi)\otimes e_i(\pi)^*\right)d\nu(\pi)\mapsto \int_{\widehat{X}}^{\oplus}\left(\sum\limits_{j\geq 1}a_{1,j}(\pi)e_j(\pi)\otimes e_1(\pi)^*\right)d\nu(\pi)$
},
\end{center}
where all but finite $a_{i,j}(\pi)\in \mathbb{C}$ are zero for each $\pi$.  
Observe that $P_X$ commutes with the right regular representation $\rho$, i.e., $\rho\circ P_X=P_X\circ \rho$. 
We have a unitary representation of $G$ on $H_X$ and denote it by $\rho_X$. 

Given two vectors $v=\int_{X}^{\oplus}v(\pi)d\nu(\pi)$ and $w=\int_{X}^{\oplus}w(\pi)d\nu(\pi)$ in $H_X$ with $v(\pi),w(\pi)\in H_{\pi}$, we have $v(\pi)\otimes w(\pi)^{*}\in H_{\pi}\otimes H_{\pi}^*$.
The corresponding Hilbert-Schmidt operator will also be denoted by $v(\pi)\otimes w(\pi)^{*}$ (see Remark \ref{rtenHS}).
We define a function on $G$ by
\begin{center}
$C_{v,w}(g)=\langle \rho_X(g^{-1})v,w\rangle_{H_X}=\int_{X}\Tr(\pi(g)^{*}v(\pi)\otimes w(\pi)^*)d\nu(\pi)$. 
\end{center}
\begin{lemma}
For $v,w\in H_X$, we have $C_{v,w}\in L^2(G)$. 
Moreover, 
\begin{center}
$\langle C_{v_1,w_1},C_{v_2,w_2}\rangle_{L^2(G)}=\int_{X}\Tr(v_1(\pi)\otimes w_1(\pi)^{*}\cdot w_2(\pi)\otimes v_2(\pi)^{*})d\nu(\pi)$, 
\end{center}
for any vectors $v_1,v_2,w_1,w_2\in H_X$. 
\end{lemma}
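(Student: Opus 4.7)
The plan is to recognize $C_{v,w}$ as the inverse Plancherel transform of the operator-valued field $A_{v,w}(\pi) := v(\pi) \otimes w(\pi)^* \in H_\pi \otimes H_{\overline{\pi}}$, extended by zero off $X$, and then invoke Theorem \ref{tplancherel}. Indeed, the defining formula for $C_{v,w}$ is literally the Fourier inversion formula (part 3 of Theorem \ref{tplancherel}) applied to $A_{v,w}$. Thus, once I verify that $A_{v,w}$ lies in the direct integral $\int_{\widehat{G}}^{\oplus} H_\pi \otimes H_{\overline{\pi}} \, d\nu(\pi)$, the Plancherel isomorphism (part 1) immediately yields $C_{v,w} \in L^2(G)$.

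For the integrability check, Remark \ref{rtenHS} identifies the Hilbert--Schmidt norm of the rank-one operator $v(\pi) \otimes w(\pi)^*$ as $\|v(\pi)\| \, \|w(\pi)\|$, so the condition reduces to $\int_X \|v(\pi)\|^2 \|w(\pi)\|^2 \, d\nu(\pi) < \infty$. I would first establish this on the dense subspace of $H_X$ consisting of bounded measurable sections — dense since $\nu(X) < \infty$ — where the bound is immediate from $\|v(\pi)\|^2 \|w(\pi)\|^2 \leq M^2 \|w(\pi)\|^2$, and then propagate the identities below to all of $H_X$ by continuity/polarization.

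For the inner product formula, apply the Parseval identity (part 2 of Theorem \ref{tplancherel}) to the two fields $A_{v_1,w_1}$ and $A_{v_2,w_2}$:
\[
\langle C_{v_1,w_1}, C_{v_2,w_2}\rangle_{L^2(G)} \;=\; \int_{\widehat{G}} \Tr\bigl(A_{v_1,w_1}(\pi) \, A_{v_2,w_2}(\pi)^*\bigr) \, d\nu(\pi).
\]
A direct computation gives $(v_2(\pi) \otimes w_2(\pi)^*)^* = w_2(\pi) \otimes v_2(\pi)^*$ for rank-one operators, and combining this with the support condition $\spt(A_{v_i,w_i}) \subset X$ produces the stated identity.

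The only substantive obstacle is the HS-integrability of $v \otimes w^*$ for arbitrary $v, w \in H_X$, which is not automatic since the product of two $L^1(\nu)$ functions need not be $L^1$. I expect this to be handled by the density/extension argument sketched above, leveraging $\nu(X) < \infty$, rather than a pointwise bound. Everything else is formal bookkeeping with Plancherel.
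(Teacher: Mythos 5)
Your main line of argument --- recognizing $C_{v,w}$ as the inverse Plancherel transform of the field $\pi\mapsto v(\pi)\otimes w(\pi)^*$ supported on $X$ and then invoking parts (1)--(3) of Theorem \ref{tplancherel} --- is exactly the paper's argument, and your computation of the adjoint of a rank-one operator and of the Parseval pairing is fine. You have also correctly isolated the one non-formal point, which the paper itself dispatches with the inadequate sentence ``the integral is bounded by the norm of $v,w$ in $H_X$'': membership of $v\otimes w^*$ in $\int_X^{\oplus}H_\pi\otimes H_{\overline\pi}\,d\nu$ requires $\int_X\|v(\pi)\|^2\|w(\pi)\|^2\,d\nu(\pi)<\infty$, and this is a product of two $L^1(\nu)$ functions, hence not automatically integrable.

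However, your proposed repair --- prove everything for bounded sections and ``propagate to all of $H_X$ by continuity/polarization'' --- does not close the gap. The obstruction is that the bilinear map $(v,w)\mapsto C_{v,w}$ is not bounded from $H_X\times H_X$ to $L^2(G)$: the very identity you want to extend gives $\|C_{v,w}\|_{L^2(G)}^2=\int_X\|v(\pi)\|^2\,\|w(\pi)\|^2\,d\nu(\pi)$, which is not dominated by $\|v\|_{H_X}^2\|w\|_{H_X}^2$ (Cauchy--Schwarz would require $L^4$ control of $\pi\mapsto\|v(\pi)\|$), so there is no continuous extension from a dense subspace. Indeed the statement as written is false for general $v,w\in H_X$: take $G=\mathbb{R}$, $X=[0,1]\subset\widehat{\mathbb{R}}$, each $H_\pi=\mathbb{C}$, and $v=w$ the section with $\|v(\pi)\|=\pi^{-1/3}$; then $v\in H_X$ but $\int_X\|v(\pi)\|^4\,d\nu=\infty$, so $C_{v,v}\notin L^2(\mathbb{R})$. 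The honest fix is to add a hypothesis, e.g.\ that one of $v,w$ is an essentially bounded section, in which case $\int_X\|v(\pi)\|^2\|w(\pi)\|^2\,d\nu\le \bigl(\sup_{\pi}\|w(\pi)\|^2\bigr)\cdot\|v\|_{H_X}^2<\infty$ and your argument goes through verbatim. This restricted form is all that is needed downstream: in Theorem \ref{tdimmeas} the lemma is only applied to pairs $(Pd_k,\eta)$ with $\|\eta(\pi)\|^2=\nu(X)^{-1}$ constant almost everywhere.
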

\begin{proof}
Since $v_i(\pi)\otimes w_i(\pi)^*$ is Hilbert-Schmidt, $v_1(\pi)\otimes w_1(\pi)^{*}\cdot w_2(\pi)\otimes v_2(\pi)^{*}$ is a trace-class operator.
The integral is bounded by the norm of $v,w$ in $H_X$ and hence well-defined. 
As $v(\pi)=w(\pi)=0$ for $\pi \in \widehat{G}- X$, we have
\begin{center}
    $C_{v,w}(g)=\int_{X}\Tr(\pi(g)^*v(\pi)\otimes w(\pi)^*)d\nu(\pi)=\int_{\widehat{G}}\Tr(\pi(g)^*v(\pi)\otimes w(\pi)^*)d\nu(\pi)$. 
\end{center}
By Theorem \ref{tplancherel}, we conclude that $v\otimes w^*=\int_{X}v(\pi)\otimes w(\pi)^*d\nu(\pi)=\widehat{C_{v,w}}$, the Fourier transform of the function $C_{v,w}$ on $G$.  
Thus we obtain
\begin{equation*}
\begin{aligned}
\langle C_{v_1,w_1},C_{v_2,w_2}\rangle_{L^2(G)}&=\int_{\widehat{G}}\Tr \left(\widehat{C_{v_1,w_1}}(\pi)\widehat{C_{v_1,w_1}}(\pi)^*\right)d\nu(\pi)\\
&=\int_{X}\Tr \left(\widehat{C_{v_1,w_1}}(\pi)\widehat{C_{v_1,w_1}}(\pi)^*\right)d\nu(\pi)\\
&=\int_{\widehat{X}}\Tr \left(v_1(\pi)\otimes w_1(\pi)^{*}\cdot w_2(\pi)\otimes v_2(\pi)^{*}\right)d\nu(\pi)\\
\end{aligned}
\end{equation*}
Then it is easy to get $C_{v,w}\in L^2(G)$.
\end{proof}

Let $\{d_k\}_{k\geq 1}$ be an orthonormal basis of $L^2(D)$. 
Note the restricted representation $\rho_X|_{\Gamma}$ makes the Hilbert space $H_X$ an $M$-module, whose von Neumann dimension can be obtained as follows. 
\begin{lemma}\label{ldimsumnorm}
With the assumption above, we have
\begin{center}
    $\dim_M(H_X)=\sum_{k\geq 1}\|Pd_k\|_{H_X}^2$.
\end{center}
\end{lemma}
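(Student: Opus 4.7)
The plan is to apply Proposition \ref{ptrdim} to the $M$-module $H_X\subset L^2(G)$ and then unwind the resulting trace formula against the basis $\{d_k\}$. Using the fundamental-domain isomorphism $L^2(G)\cong l^2(\Gamma)\otimes L^2(D)$ recalled earlier in this section, the restriction $\rho_G|_\Gamma$ becomes $\rho_\Gamma\otimes\id$, and identifying $L^2(D)\cong l^2(\mathbb{N})$ via $d_k\mapsto e_k$ gives $L^2(G)\cong L^2(M)\otimes l^2(\mathbb{N})$ as $M$-modules (with $L^2(M)=l^2(\Gamma)$). Since $P_X$ commutes with the entire right regular representation $\rho_G$, it commutes in particular with the $M$-action, so $P_X$ lies in $M'\otimes B(l^2(\mathbb{N}))$; the inclusion $H_X\hookrightarrow L^2(G)$ then serves as the isometry $u$ of Proposition \ref{ptrdim} with $uu^*=P_X$. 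Consequently $\dim_M(H_X)=(\tau\otimes\Tr)(P_X)$.

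The remaining task is to evaluate this trace against the chosen basis. Because $\tau(x)=\langle x\delta_e,\delta_e\rangle$ and $\Tr$ is the standard trace on $B(l^2(\mathbb{N}))$, for any positive $T\in M'\otimes B(l^2(\mathbb{N}))$ one has
\[
(\tau\otimes\Tr)(T)=\sum_{k\geq 1}\langle T(\delta_e\otimes e_k),\,\delta_e\otimes e_k\rangle.
\]
Under the fundamental-domain isomorphism, $\delta_e\otimes d_k$ corresponds to the function on $G$ equal to $d_k$ on $D$ and zero elsewhere, i.e.\ to $d_k\in L^2(G)$ extended by zero. Since $P_X$ is a projection onto $H_X$, $\langle P_X d_k,d_k\rangle_{L^2(G)}=\|P_X d_k\|_{H_X}^2$, and summing over $k$ yields the claimed identity.

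The only delicate point is the bookkeeping in the first paragraph: verifying that the canonical trace on $M'=\lambda(\Gamma)''$ used in Proposition \ref{ptrdim} agrees with $\langle\,\cdot\,\delta_e,\delta_e\rangle$, and that $\delta_e\otimes d_k$ in the tensor product is indeed the function $d_k$ in $L^2(G)$. Beyond this identification the argument is a direct unfolding of definitions with no analytic subtleties, and finiteness of $\nu(X)$ only enters implicitly to ensure the sum is finite.
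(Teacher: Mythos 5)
Your proof is correct and follows essentially the same route as the paper: identify $L^2(G)\cong L^2(M)\otimes L^2(D)$ via the fundamental domain, realize $\dim_M(H_X)$ as $(\tau\otimes\Tr)(P_X)$ by Proposition \ref{ptrdim}, and evaluate that trace against the vectors $\delta_e\otimes d_k$. The paper carries out the same computation by introducing the projection $Q$ onto $\mathbb{C}\delta_e\otimes L^2(D)$ and checking $\Tr_{M'}(x)=\Tr_{L^2(G)}(QxQ)$ on generators of $M'$ before passing to the strong limit, which is exactly the verification you isolate as the ``only delicate point.''
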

\begin{proof}
Let $u$ be the inclusion $H_X\to L^2(G)$. 
We have $u^*u={\rm id}_{H_X}$ and $uu^*=P_X$. 
Note $L^2(G)\cong L^2(M)\otimes L^2(D,dg)$, where $L^2(M)$ is the standard $M$-module and $L^2(D,dg)$ is regarded as a trivial $M$-module. 
Thus, by definition (see Proposition \ref{ptrdim}), we know
\begin{center}
$\dim_M(H_X)=\Tr_{M'\cap B(L^2(G))}(P)$, 
\end{center}
where $M'\cap B(L^2(G))=\{T\in B(L^2(G))|Tx=xT,~\forall x\in M\}$, the commutant of $M$ on $L^2(G)$.
On the right-hand side, 
\begin{center}
 $\Tr_{M'\cap B(L^2(G))}=\tr_{M'\cap B(L^2(M))}\otimes \Tr_{B(L^2(D))}$    
\end{center}
is the natural trace on $M'$. 

The commutant $M'$ is generated by the finite sums of the form
\begin{center}
$x=\sum_{\gamma\in \Gamma}\rho_{\gamma}\otimes a_{\gamma}$,
\end{center}
where $\rho_{\gamma}=J\lambda(\gamma)J\in M'\cap L^2(M)$ (here $J\colon L^2(M)\to L^2(M)$ is the conjugate linear isometry extended from $x\mapsto x^*$) and $a_{\gamma}$ is a finite rank operator in $B(L^2(D))$. 

Let $d_m^*\otimes d_n$ denotes the operator $\xi \mapsto \langle d_m,\xi\rangle\cdot d_n$ on $L^2(D)$. 
Then each $a_{\gamma}$ can be written as $a_{\gamma}=\sum_{m,n\geq 1}a_{\gamma,m,n}d_m^*\otimes d_n$ with $a_{\gamma,m,n}\in \mathbb{C}$ and all but finite terms of $a_{m,n}$ are trivial. 
Thus we obtain
\begin{center}
$\Tr_{M'}(\rho_{\gamma}\otimes a_{\gamma})=\tr_M(\lambda_{\gamma})\sum_{m\geq 1}a_{\gamma,m,m}=\delta_{e}(\lambda)\Tr_{L^2(D)}(a_\gamma)$.
\end{center}
This is equivalent to say
\begin{center}
$\Tr_{M'}(x)=\Tr_{L^2(D)}(a_e)$. 
\end{center}

Let $Q$ be the projection of $L^2(G)$ onto $L^2(D)\cong \mathbb{C}\delta_e\otimes L^2(D)$. 
Then $\Tr_{L^2(D)}=\Tr_{L^2(G)}(QyQ)$ for $y\in B(L^2(D))$. 
We have
\begin{equation}\label{eTrQxQ}
    \Tr_{M'}(x)=\Tr_{L^2(D)}(a_e)=\Tr_{L^2(G)}(Qa_eQ)=\Tr_{L^2(G)}(QxQ)
\end{equation}
As $P$ is a strong limit of elements that have the same form as $x$ above and the traces are normal, formula (\ref{eTrQxQ}) holds for $x=P$ and we obtain
\begin{equation*}
\begin{aligned}
\dim_M(H_X)&=\Tr_{M'}(P)=\Tr_{L^2(G,dg)}(QPQ)\\
&=\sum\nolimits_{k\geq 1}\langle QPQd_k,d_k\rangle_{L^2(G)}=\sum\nolimits_{k\geq 1}\langle Qd_k,PQd_k\rangle_{L^2(G)}\\
&=\sum\nolimits_{k\geq 1}\langle d_k,Pd_k\rangle_{L^2(G)}=\sum\nolimits_{k\geq 1}\langle Pd_k,Pd_k\rangle_{L^2(G)}\\
&=\sum\nolimits_{k\geq 1}\langle Pd_k,Pd_k\rangle_{H_X}=\sum\nolimits_{n\geq 1}\|Pd_k\|_{H_X}^2
\end{aligned}
\end{equation*}
\end{proof}

\section{A product formula}\label{sprod}
\begin{theorem}\label{tdimmeas}
Let $G$ be a locally compact unimodular type I group with Haar measure $\mu$. 
Let $\nu$ be the Plancherel measure on the unitary dual $\widehat{G}$ of $G$. 
Suppose $\Gamma$ is a lattice in $G$ and $\mathcal{L}(\Gamma)$ is the group von Neumann algebra of $\Gamma$.  
Let $X\subset\widehat{G}$ such that $\nu(X)<\infty$ and $H_X=\int_X^{\oplus} H_{\pi}d\nu(\pi)$.  
We have 
\begin{center}
$\dim_{\mathcal{L}(\Gamma)}(H_X)=\covol(\Gamma)\cdot \nu(X)$. 
\end{center}
\end{theorem}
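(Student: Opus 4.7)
The plan is to apply Lemma \ref{ldimsumnorm} and compute $\sum_{k\geq 1}\|P_X d_k\|_{H_X}^2$ explicitly by unwinding the Plancherel decomposition. Expanding the Fourier transform as $\widehat{d_k}(\pi)=\sum_{i,j}a_{i,j}(\pi,k)\,e_j(\pi)\otimes e_i(\pi)^*$, the definition of $P_X$ keeps only the $i=1$ components over $\pi\in X$, so
$$\|P_X d_k\|_{H_X}^2=\int_X\sum_{j\geq 1}|a_{1,j}(\pi,k)|^2\,d\nu(\pi).$$
Writing $\phi_{j,1}(x)=\langle\pi(x)e_j(\pi),e_1(\pi)\rangle_{H_\pi}$ and using that $\pi(x^{-1})=\pi(x)^*$, a direct unwinding of $\widehat{d_k}(\pi)=\int_D d_k(x)\pi(x^{-1})\,d\mu(x)$ (noting $\spt(d_k)\subset D$) yields the clean identity $a_{1,j}(\pi,k)=\langle d_k,\phi_{j,1}|_D\rangle_{L^2(D)}$.

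Next I would sum over $k$ using Parseval's identity for the orthonormal basis $\{d_k\}$ of $L^2(D)$. Since $|\phi_{j,1}(x)|\leq 1$ pointwise and $\mu(D)=\covol(\Gamma)<\infty$, each restriction $\phi_{j,1}|_D$ lies in $L^2(D)$ with norm at most $\sqrt{\covol(\Gamma)}$, so
$$\sum_{k\geq 1}|a_{1,j}(\pi,k)|^2=\|\phi_{j,1}|_D\|_{L^2(D)}^2=\int_D|\phi_{j,1}(x)|^2\,d\mu(x).$$
Tonelli's theorem (all integrands are nonnegative) permits interchanging the remaining sums and integrals, reducing the computation to the pointwise identity
$$\sum_{j\geq 1}|\phi_{j,1}(x)|^2=\sum_{j\geq 1}|\langle e_j(\pi),\pi(x)^{-1}e_1(\pi)\rangle|^2=\|\pi(x)^{-1}e_1(\pi)\|_{H_\pi}^2=1,$$
which is simply unitarity of $\pi(x)$ applied to the unit vector $e_1(\pi)$.

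Assembling the pieces,
$$\dim_{\mathcal{L}(\Gamma)}(H_X)=\sum_{k\geq 1}\|P_X d_k\|_{H_X}^2=\int_X\int_D 1\,d\mu(x)\,d\nu(\pi)=\covol(\Gamma)\cdot\nu(X),$$
as desired. The main technical hurdle is not the algebra but the bookkeeping: one must verify the direct-integral expression for $\|P_X d_k\|^2$ and check the measurability of $(\pi,x)\mapsto\phi_{j,1}(x)$ in the chosen measurable field $\{e_k(\pi)\}$, both of which follow from the standard conventions underlying Theorem \ref{tplancherel}. The conceptual point is that unitarity of $\pi$ collapses the $j$-sum to the constant $1$ uniformly in $\pi$, so that the total dimension factors cleanly into the covolume times the Plancherel measure.
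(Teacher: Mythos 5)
Your proof is correct, and it takes a genuinely different route from the paper's. Both arguments start from Lemma \ref{ldimsumnorm}, i.e.\ from $\dim_{\mathcal{L}(\Gamma)}(H_X)=\sum_k\|P_Xd_k\|^2_{H_X}$, but you then evaluate this sum head-on: you identify $a_{1,j}(\pi,k)$ as the $L^2(D)$-pairing of $d_k$ with the restricted matrix coefficient $\phi_{j,1}|_D$ (legitimate, since $d_k\in L^1(G)\cap L^2(G)$ because $\mu(D)<\infty$), collapse the $k$-sum by Parseval for the orthonormal basis $\{d_k\}$ of $L^2(D)$, and collapse the $j$-sum by unitarity of $\pi(x)$; Tonelli justifies all interchanges since every term is nonnegative. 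The paper instead introduces an auxiliary vector $\eta\in H_X$ with $\|\eta(\pi)\|^2=1/\nu(X)$ a.e., writes $1=\|\rho(g)\eta\|^2$ in the orthonormal basis $\{\rho(\gamma)d_k\}$ of $L^2(G)$, integrates over the fundamental domain $D$ to produce $\covol(\Gamma)$ on one side, and evaluates the other side using the $L^2$-norms of the coefficient functions $C_{v,w}$ established in the preceding lemma. Your version has two small advantages: it bypasses the lemma on $C_{v,w}$ and the auxiliary vector entirely, and it does not implicitly require $\nu(X)>0$ (the paper's normalization $\|\eta(\pi)\|^2=1/\nu(X)$ degenerates in that edge case); it also makes visibly transparent where the covolume enters, namely as $\int_D\|\pi(x)^*e_1(\pi)\|^2\,d\mu(x)=\mu(D)$. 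The paper's route, on the other hand, packages the same orthogonality information into the Fourier-transform identity $\widehat{C_{v,w}}=v\otimes w^*$, which is reused conceptually elsewhere. The only points you should make sure are nailed down are the ones you already flag: measurability of the field of bases $\{e_j(\pi)\}$ and the direct-integral formula for $\|P_Xd_k\|^2$, both of which are standard consequences of the conventions in Theorem \ref{tplancherel}.
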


\begin{proof}
We take a vector $\eta=\int_{X}^{\oplus}\eta(\pi)d\nu(\pi)$ in $H_X$ such that $\|\eta(\pi)\|_{H_{\pi}}^2=\frac{1}{\nu(X)}$ almost everywhere in $X$. 
Then $\eta$ is a unit vector in $H_X$ and also in $L^2(G)$. 

Observe $\{\delta_{\gamma}\otimes d_k\}_{\gamma\in \Gamma,n\geq 1}$ is an orthogonal basis of $L^2(G,\mu)$ via the isomorphism $L^2(G)\cong l^2(\Gamma)\otimes L^2(D,\mu)$. 
Note $\{d_k\}_{k\geq 1}$ can be regarded as functions in $L^2(G)$ with support in $D$.  We have $\delta_{\gamma}\otimes d_k=\rho_G(\gamma)d_k$ for $k\geq 1$ and $\gamma\in \Gamma$. 
Hence we have
\begin{center}
$1=\|\rho_X(g)\eta\|_{H_X}^2=\|\rho(g)\eta\|_{L^2(G)}^2=\sum_{\gamma\in \Gamma,k\geq 1}|\langle \rho(g)\eta,\rho(\lambda)d_k\rangle_{L^2(G)}|^2$. 
\end{center}

Consequently, we obtain: 
\begin{equation*}
\begin{aligned}
{\rm covol}(\Gamma)&=\int_{D}1d\mu(g)=\int_{D}\sum_{\gamma\in \Gamma,k\geq 1}|\langle \rho(\lambda)^{*}\rho(g)\eta,d_k\rangle|^2d\mu(g)\\
&=\sum_{k\geq 1}\int_{G}|\langle P\rho(g)\eta,d_k\rangle_{L^2(G)}|^2d\mu(g)=\sum_{k\geq 1}\int_{G}|\langle \rho(g)\eta,Pd_k\rangle_{H_X}|^2d\mu(g)\\
&=\sum_{k\geq 1}\int_{G}|\langle \rho(g)^*Pd_k,\eta\rangle|^2d\mu(g)=\sum_{k\geq 1}\langle C_{Pd_k,\eta},C_{Pd_k,\eta}\rangle_{L^2(G)}\\
&=\sum_{k\geq 1}\int_X \Tr( (Pd_n)(\pi)\otimes \eta(\pi)^*\cdot (\eta(\pi)\otimes(Pd_k)(\pi)^*)d\nu(\pi)\\
&=\sum_{k\geq 1}\int_X \langle  (Pd_n)(\pi)\otimes \eta(\pi)^*,(Pd_n)(\pi)\otimes \eta(\pi)^*\rangle_{H_{\pi}\otimes H_{\pi}^*}d\nu(\pi)\\
&=\sum_{k\geq 1}\int_{X}\|\eta(\pi)\|_{H_\pi}^2\cdot \|(Pd_k)(\pi)\|_{H_{\pi}}^2d\nu(\pi)\\
&=\frac{1}{\nu(X)}\sum_{n\geq 1}\|Pd_k\|_{H_X}^2,
\end{aligned}
\end{equation*}
which is $\dim_M(H_X)\cdot \nu(X)^{-1}$ by Lemma \ref{ldimsumnorm}. 
Hence we get $\dim_M(H_X)=\covol(\Gamma)\cdot \nu(X)$. 
\end{proof}
\begin{remark}\label{rcovol}
\begin{enumerate}
\item If $\mu'=k\cdot \mu$ is another Haar measure on $G$ for some $k>0$, the covolumes are related by  $\covol'(\Gamma)=\mu'(G/\Gamma)=k'\cdot\mu(G/\Gamma)=k\cdot\covol(\Gamma)$.
But the induced Plancherel measure $\nu'=k^{-1} \cdot\nu$ and the dependencies cancel out in the formula above. 
    
\item The conditions of $G$ above are satisfied for all connected semisimple Lie groups and connected reductive algebraic groups over local fields and adelic rings  (see Section \ref{sPadele}).
    
\end{enumerate}
\end{remark}

If $\pi$ is an atom in $\widehat{G}$, i.e., $\nu(\{\pi\})>0$,  
we can show $\pi$ is a discrete series representation and $\nu(\{\pi\})$ is just the formal dimension of $\pi$ \cite{DiCalg,Ro}. 
Under this assumption, if $G$ is a real Lie group that has discrete series and $\Gamma$ is an ICC group,  the theorem reduces to the special case of a single representation (see \cite{GHJ} Theorem 3.3.2)
\begin{center}
$\dim_{\mathcal{L}(\Gamma)}(H_{\pi})=\covol(\Gamma)\cdot d_{\pi}$.  
\end{center}
This is motivated by the study of discrete series of Lie groups by M. Atiyah and W. Schmid \cite{ASds77}.
(See also \cite{Ruthprod19} for a discussion of the $p$-adic $\GL(2)$ together with its cuspidal and Steinberg representations.) 

It is known that a Lie group $G$ has discrete series if and only if $\rk G=\rk K$ for a maximal compact subgroup $K$. 
Thus it excludes a large family of Lie groups, e.g, $\SL_n(\mathbb{R})$ ($n\geq 2$), $\SO(n,\mathbb{C})$, $\GL_n(\mathbb{C})$. 
A natural question is what is the analog for other Lie groups or other irreducible representations that are not discrete series. 
This is one of the motivations for this result. 
  

Before closing this section, I would like to introduce an example of $\SL(2,\mathbb{R})$ as an application of Theorem \ref{tdimmeas}. 
\begin{example}
Consider $G=\SL(2,\mathbb{R})$ and $\mathcal{H}=\{x+iy|x,y\in \mathbb{R},y>0\}$, the Poincar\'{e} upper-half plane with a $\SL(2,\mathbb{R})$-invariant measure $y^{-2}dxdy$. 
Observe that $\PSL(2,\mathbb{R})/\{\SO(2)/\{\pm 1\}\}\cong \mathcal{H}$. 
We fix a Haar measure on $\SO(2)/\{\pm 1\}$ with total measure $1$ and then take the Haar measure $\mu$ on $\PSL(2,\mathbb{R})$ to be the product measure on $\mathcal{H}\times \{\SO(2)/\{\pm 1\}\}$. 
Let $\nu$ be the Plancherel measure given by $\mu$. 

The classification of irreducible unitary representations of $\SL(2,\mathbb{R})$ is well-known (see \cite{Gel75}). 
However, the support of the Plancherel measure $\nu$ on the unitary dual $\widehat{\SL(2,\mathbb{R})}$ contains only the following two families of irreducible representations (see \cite{Fo2,Kn}). 
\begin{enumerate}
 
    \item {\it Discrete series representations}: $\{\pi_n^{\pm}|n\geq 2\}$. 
    The underlying space of $\pi_n^+$ is given as 
    \begin{center}
    $H_n^+=\{f\colon \mathcal{H}\to\mathbb{C}\text{ holomorphic}~|\int_{\mathcal{H}}|f(x+iy)|^2y^{n-2}dxdy<\infty\}$. 
    \end{center}
    For $g=\begin{pmatrix}
a & b\\
c & d
\end{pmatrix}\in \SL(2,\mathbb{R})$ and $f\in H_n^+$,  the action is given by $(\pi_k^+(g)f)(z)=(bz+d)^{-k}f\left(\frac{az+b}{cz+d}\right)$. 
The representation $\pi_n^-$ is defined to be the contragradient of $\pi_n^+$. 
We have $\nu(\{\pi_n^+\})=\nu(\{\pi_n^-\})=\frac{n-1}{4\pi}$ for $n\geq 2$. 
\item {\it Principal series representations}: $\{\pi^{\pm}_{it}|t>0\}$.  
The spaces $\{H_{it}^{\pm}\}_{t>0}$ for them are all $L^2(\mathbb{R})$ with distinct actions given by 
\begin{center}
$\pi^{+}_{it}\begin{pmatrix}
a & b\\
c & d
\end{pmatrix}f(x)=|bx+d|^{-1-it}f\left(\frac{ax+c}{bx+d}\right)$,\\
$\pi^{-}_{it}\begin{pmatrix}
a & b\\
c & d
\end{pmatrix}f(x)={\rm sgn}(bx+d)|bx+d|^{-1-it}f\left(\frac{ax+c}{bx+d}\right)$. 
\end{center}
Let $P=\{g=\begin{pmatrix}
a & b\\
0 & a^{-1}
\end{pmatrix}|a,b\in \mathbb{R}, a\neq 0\}$ with characters given by
$\chi_t^{\pm}\begin{pmatrix}
a & b\\
0 & a^{-1}
\end{pmatrix}=\varepsilon^{\pm}(a)|a|^{it}$, 
where $\varepsilon^+(a)=1$ and $\varepsilon^{-}(a)=\rm{sgn}(a)$.   
One can show $\pi^{\pm}_{it}=\Ind_P^G(\chi_t^{\pm})$. 
We have $d\nu(\pi_t^+)=\frac{t}{8\pi}\tanh{\frac{\pi t}{2}}dt$ and $d\nu(\pi_t^-)=\frac{t}{8\pi}\coth{\frac{\pi t}{2}}dt$.
\end{enumerate}

For the lattice $\PSL(2,\mathbb{Z})$, we have $\covol(\Gamma)=\frac{\pi}{3}$. 
By Theorem \ref{tdimmeas}, we list the dimensions of some modules over  $\mathcal{L}(\PSL(2,\mathbb{Z}))$ (which is a factor of type ${\rm II}_1$).

\begin{enumerate}
\item (Discrete part) Let $X=\{\pi_{n_1}^{+},\dots,\pi_{n_k}^{+}\}$. We have $H_X=\oplus_{1\leq i\leq k}\pi^+_{n_i}$. 
Hence

\centerline{ $\dim_{\mathcal{L}(\PSL(2,\mathbb{Z}))}H_X=\frac{\sum_{i=1}^{k}n_i}{12}-\frac{k}{12}$. }
Note the dimension above remains the same if we replace any $\pi_{n_i}^{+}$ by its dual $\pi_{n_i}^{-}$. 
    \item (Continuous part) Let $Y=\{\pi^{+}_{it}|t\in [a,b]\}$ for some $0<a<b$.  
    We have
    $H_{Y}=\int_{[a,b]}^{\oplus}H_{\pi_{it}^{+}}d\nu(\pi_{it}^{+})$. 
    Hence
    
    \centerline{$\dim_{\mathcal{L}(\PSL(2,\mathbb{Z}))}H_{Y}=\frac{1}{24}\int_a^b t\tanh{\frac{\pi t}{2}}dt$.}

\end{enumerate}

\end{example}


\section{The dimension and the adelic Plancherel measure}\label{smain}

Let $G$ be a reductive group defined over a number field $F$. 
Assume $G$ is of dimension $d$ over $F$ so that we are able to take a nonzero left-invariant $d$-form $\omega$ on $G$. 
For each infinite place $v\in V_{\infty}$, $\omega$ determines a left-invariant different form $\omega_v$ on the smooth manifold $G(F_v)$. 
This in turn gives a Haar measure $\mu_v$ of $G(F_v)$. 
For each finite place $v\in V_f$, let $p_v\in \mathbb{Z}$ be the prime number  lying under $v$. 
Let $\mu_v$ be the Haar measure on $G(F_v)$, which is uniquely determined by the corresponding $d$-form $\omega_v$ on the $p_v$-adic analytic manifold $G(F_v)$. 
The {\it Tamagawa measure}, denoted by $\mu_{G(\mathbb{A}_F)}$, or simply $\mu$, is defined to be the restricted product of the measures $\{\mu_v\}_{v\in V}$ on the adelic group $G(\mathbb{A}_F)=\prod_{v\in V_{\infty}}G(F_v)\times \prod'_{v\in V_f}G(F_v)$, which is independent of the choice of $\omega$ \cite{PR94}. 

The volume of $G(\mathbb{A}_F)/G(F)$ (if it exists) is called the {\it Tamagawa number} of $G$ and denoted by $\tau_F(G)$, or simply $\tau(G)$.  
In fact, for any finite extension of number fields $L/F$, we can prove $\tau_L(G)=\tau_F(G)$ \cite{Osttmgw84}. 
It suffices to consider the case $F=\mathbb{Q}$. 
The following result was conjectured by A. Weil and proved by R. Langlands \cite{Llds66Tama}, K. Lai \cite{Lai80Tama}, R. Kottwitz \cite{Kott88Tama} and V. Chernousov \cite{CnsvE8}. 

\begin{theorem}
\label{tWeilTama}
If $G$ is a simply connected semisimple group, then $\tau(G)=1$. 
\end{theorem}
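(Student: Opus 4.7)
The plan is to follow the multi-stage strategy developed by Langlands, Lai, Kottwitz and Chernousov that the excerpt alludes to. The very first step is Ono's reduction theorem $\tau_L(G)=\tau_F(G)$ for finite extensions $L/F$, which we already invoke above to reduce to the case $F=\mathbb{Q}$. After that I would split the argument into a quasi-split case and a reduction of inner forms to the quasi-split case.

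For quasi-split simply connected $G$, I would use the Arthur--Selberg trace formula applied to a test function whose spectral side isolates the constant function on $G(\mathbb{A})/G(F)$. The geometric side produces $\vol(G(\mathbb{A})/G(F))$, while the spectral side is computed by analyzing the Eisenstein series built from Borel and parabolic subgroups. Following Langlands's original split-Chevalley calculation, the normalizing constants of the intertwining operators along a minimal parabolic are products of completed Dedekind zeta values; more generally, Lai's extension to the quasi-split case replaces these by Artin $L$-functions attached to the cocharacter lattice of a maximal $F$-torus. The telescoping identity
\begin{equation*}
\tau(G) \;=\; \prod_{i=1}^{r} \frac{\zeta_F^{*}(d_i+1)}{\zeta_F^{*}(d_i+1)} \;=\; 1
\end{equation*}
(schematically; with the $d_i$ the exponents of the Weyl group and appropriate Artin $L$-factors in the quasi-split case) drops out of the functional equation together with the Euler--Poincar\'e computation of discriminant factors.

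The second step is to reduce the general case to the quasi-split one via inner twisting. Here I would invoke Kottwitz's formula
\begin{equation*}
\tau(G) \;=\; \frac{|\pi_0(Z(\widehat{G})^{\Gamma})|}{|\ker^{1}(F,Z(\widehat{G}))|},
\end{equation*}
proved by comparing stable trace formulas for $G$ and its quasi-split inner form through endoscopic transfer. Because $G$ is simply connected, $\widehat{G}$ is of adjoint type, so $Z(\widehat{G})$ is trivial and both numerator and denominator are $1$, yielding $\tau(G)=1$ once the quasi-split case is known.

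The main obstacle is the exceptional group $E_8$, where the endoscopic comparison used by Kottwitz rests on stabilization statements that were, at the time, not available unconditionally for every inner form. Chernousov's contribution removes this gap by a direct Galois-cohomological computation, establishing the Hasse principle for the relevant classifying set of inner forms of $E_8$ so that the quasi-split reduction can be applied without the missing endoscopic input. This final cohomological step is the delicate part of the whole proof; the preceding Eisenstein-series analysis and the Kottwitz reduction are, by comparison, clean once the necessary functional-equation identities are in place.
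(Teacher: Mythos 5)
The paper does not prove this theorem at all: it is the Weil conjecture on Tamagawa numbers, stated as a black box and attributed to Langlands, Lai, Kottwitz, and Chernousov, so there is no in-paper argument to compare yours against. Your proposal correctly reproduces the architecture of the literature proof --- Ono's reduction to $F=\mathbb{Q}$ and to the behaviour under restriction of scalars, Langlands's computation in the split (Chevalley) case extended by Lai to quasi-split groups, Kottwitz's reduction of inner forms to the quasi-split case, and Chernousov's Hasse principle for $E_8$ closing the last gap --- and the final cohomological observation that $Z(\widehat{G})$ is trivial for simply connected $G$ is correct. But as a proof it has essentially no verifiable content: every substantive step is a pointer to a cited work rather than an argument. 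The displayed ``telescoping identity'' is literally a product of ratios of identical terms and so says nothing; the actual content of Langlands's calculation is that the volume of $G(F)\backslash G(\mathbb{A})$, extracted as the residue of the constant term of a minimal-parabolic Eisenstein series, equals a ratio of completed $L$-values that the functional equation forces to be $1$, and none of that computation appears here. (Also, framing Langlands's 1966 argument as an application of the Arthur--Selberg trace formula is anachronistic; it is a direct analysis of the spectral decomposition of $L^2(G(F)\backslash G(\mathbb{A}))$ and of Eisenstein series, whereas it is Kottwitz's later comparison that genuinely uses a stabilized trace formula.)

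If your intention was to survey why the theorem is true, this is a reasonable road map. If it was to supply a proof, the gap is that each of the four stages you name is itself a long and difficult paper, and nothing in your write-up discharges any of them; in particular the quasi-split volume computation and the stabilization input to Kottwitz's comparison are asserted, not performed. Given that the paper under review likewise treats the result as a citation, the appropriate move here is simply to cite the four references, as the author does, rather than to gesture at their contents in the guise of a proof.
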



By fixing the Tamagama measure $\mu$ on $G(\mathbb{A}_F)$ as a Haar measure and the corresponding Plancherel measure $\nu$ on $\widehat{G(\mathbb{A}_F)}$, we have the following result. 

\begin{theorem}\label{tmain1}
Let $G$ be a simply connected semisimple algebraic group over a number field $F$.  
Let $X\subset \widehat{G(\mathbb{A}_F)}$ such that $\nu(X)<\infty$ and $H_X=\int_{\pi\in X}^{\oplus}H_{\pi}d\nu_{G(\mathbb{A}_F)}(\pi)$. 
We have
\begin{center}
    $\dim_{\mathcal{L}(G(F))}H_X=\nu_{G(\mathbb{A}_F)}(X)$. 
\end{center}
\end{theorem}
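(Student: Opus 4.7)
The plan is to reduce the theorem directly to the product formula of Theorem \ref{tdimmeas} combined with the Weil--Tamagawa number computation of Theorem \ref{tWeilTama}. Setting $\Gamma = G(F)$ diagonally embedded in $G(\mathbb{A}_F)$, I would first verify that $\Gamma$ is a lattice in $G(\mathbb{A}_F)$: the discreteness of $G(F)$ follows from the standard fact that $F$ is discrete and cocompact-inside-adeles in $\mathbb{A}_F$, applied coordinatewise via a faithful linear embedding $G \hookrightarrow \GL_n$; the finiteness of $\mu(G(\mathbb{A}_F)/G(F))$ under the Tamagawa measure is the classical Borel--Harish-Chandra--Mostow finiteness theorem for semisimple $G$. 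I would also note that $G(\mathbb{A}_F)$ satisfies the hypotheses needed for Theorem \ref{tdimmeas}: it is locally compact Hausdorff, second countable, unimodular (reductive adelic groups are unimodular), and type I (already recorded in Section \ref{sPadele}).

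Next I would identify the covolume in Theorem \ref{tdimmeas} with the Tamagawa number. By construction of the Tamagawa measure in Section \ref{smain}, one has
\begin{equation*}
\covol(G(F)) \;=\; \mu_{G(\mathbb{A}_F)}\bigl(G(\mathbb{A}_F)/G(F)\bigr) \;=\; \tau(G).
\end{equation*}
Then Theorem \ref{tWeilTama} (the Weil conjecture, now theorem via Langlands, Lai, Kottwitz, Chernousov), applied to the simply connected semisimple group $G$, gives $\tau(G)=1$. Thus $\covol(G(F))=1$ under the Tamagawa measure.

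Finally I would plug these facts into Theorem \ref{tdimmeas} with the locally compact type I group $G(\mathbb{A}_F)$, the lattice $\Gamma = G(F)$, and the prescribed subset $X\subset \widehat{G(\mathbb{A}_F)}$ of finite Plancherel measure. The theorem yields
\begin{equation*}
\dim_{\mathcal{L}(G(F))}(H_X) \;=\; \covol(G(F))\cdot \nu_{G(\mathbb{A}_F)}(X) \;=\; 1\cdot \nu_{G(\mathbb{A}_F)}(X) \;=\; \nu_{G(\mathbb{A}_F)}(X),
\end{equation*}
which is the desired identity. I do not anticipate a serious obstacle here: the genuine content has already been invested in Theorem \ref{tdimmeas} and Theorem \ref{tWeilTama}, and the only thing to check carefully is that the normalizations match, i.e.\ that the \emph{same} Tamagawa measure used to define $\nu_{G(\mathbb{A}_F)}$ on the dual is the one whose total mass on $G(\mathbb{A}_F)/G(F)$ is $\tau(G)$; by Remark \ref{rcovol}(1), any rescaling would cancel between covolume and Plancherel measure, so this is automatic.
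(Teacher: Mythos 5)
Your proposal is correct and follows exactly the paper's own argument: apply Theorem \ref{tdimmeas} to the lattice $G(F)\subset G(\mathbb{A}_F)$ and then invoke Theorem \ref{tWeilTama} to conclude $\covol(G(F))=\tau(G)=1$ under the Tamagawa measure. The only difference is that you spell out the verification that $G(F)$ is a lattice and that the hypotheses of Theorem \ref{tdimmeas} hold, which the paper leaves implicit.
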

\begin{proof}
By Theorem \ref{tdimmeas}, we have
\begin{center}
$\dim_{\mathcal{L}(G(F))}H_X=\covol(G(F))\cdot\nu_{G(\mathbb{A}_F)}(X)$. 
\end{center}
Then it follows Theorem \ref{tWeilTama}. 
\end{proof}

Here we give a criterion for the case when $\mathcal{L}(G(F))$ is a type $\text{II}_1$ factor, which is related to the local groups at some infinite places.
\begin{proposition}
If there is an infinite place $v$ such that $G(F_v)$ is center-free and connected,   
$G(F)$ is an ICC group and hence $\mathcal{L}(G(F))$ is a $\rm{II}_1$ factor. 
\end{proposition}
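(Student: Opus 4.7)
The claim that $\mathcal{L}(G(F))$ is a $\mathrm{II}_1$ factor is equivalent to $G(F)$ being a nontrivial ICC group, by the criterion recalled in Section~\ref{svndim}. The plan is therefore to fix a nontrivial $\gamma\in G(F)$ and prove that its $G(F)$-conjugacy class $C_\gamma=\{g\gamma g^{-1}:g\in G(F)\}$ is infinite. I argue by contradiction: assuming $C_\gamma$ is finite, I will use the hypothesis at $v$ to conclude $\gamma=e$, contradicting $\gamma\neq e$.

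The only analytic input I need is that $G(F)$ is dense in $G(F_v)$; this is Platonov's weak approximation theorem, which applies because $G$ is assumed throughout Section~\ref{smain} to be simply connected semisimple. Granting it, consider the continuous map $c_\gamma\colon G(F_v)\to G(F_v)$, $g\mapsto g\gamma g^{-1}$. If $C_\gamma$ were finite, then $c_\gamma$ would send the dense subset $G(F)\subset G(F_v)$ into the finite (hence closed) set $C_\gamma$, so by continuity $c_\gamma(G(F_v))\subseteq C_\gamma$ as well. Since $G(F_v)$ is connected by hypothesis, its continuous image $c_\gamma(G(F_v))$ is a connected subset of a finite set, hence a singleton; evaluating at $g=e$ identifies this singleton as $\{\gamma\}$. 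Thus $g\gamma g^{-1}=\gamma$ for every $g\in G(F_v)$, i.e.\ $\gamma\in Z(G(F_v))$. As $G(F_v)$ is center-free by hypothesis, $\gamma=e$, the desired contradiction. It follows that $C_\gamma$ is infinite for every nontrivial $\gamma$, so $G(F)$ is ICC and $\mathcal{L}(G(F))$ is a $\mathrm{II}_1$ factor.

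The only potential obstacle is justifying weak approximation at $v$, but this is a classical result for simply connected semisimple groups over number fields and can simply be cited. The rest is a short topological argument: density plus continuity promote a finite $G(F)$-conjugacy class to a finite $G(F_v)$-conjugacy class, and connectedness of $G(F_v)$ collapses any such finite orbit to a single point, at which stage the center-freeness hypothesis closes the argument.
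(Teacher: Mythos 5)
Your proof is correct and follows essentially the same route as the paper's: density of $G(F)$ in $G(F_v)$ at an archimedean place, continuity of conjugation to push a finite $G(F)$-conjugacy class to a finite $G(F_v)$-conjugacy class, connectedness to collapse that class to $\{\gamma\}$, and center-freeness to force $\gamma=e$. The only cosmetic difference is in how connectedness is applied --- the paper uses orbit--stabilizer (a closed finite-index subgroup of a connected group is the whole group), while you observe directly that a continuous image of a connected space in a finite set is a singleton.
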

\begin{proof}
First, we observe $G(F)$ is dense in each local group $G(F_v)$ if the place $v$ is archimedean ($v\in V_{\infty}$). 

Take $\gamma\in G(F)$ and consider its conjugacy class $C_\gamma=\{g\gamma g^{-1}|g\in G(F)\}$ in $G(F)$. 
The map $\gamma\to C_{\gamma}$ defined on $G(F)$ can be extended to the group $G(F_v)$ by continuity. 
If $C_{\gamma}$ is finite, $C_{\gamma}=\overline{C_\gamma}=\{g\gamma g^{-1}|g\in G(F_v)\}$, which can be denoted as $Y=\{\gamma,g_{1}\gamma g_{1}^{-1},\dots,g_{N}\gamma g_{N}^{-1}\}$. 

Let $G(F_v)$ act on the set $Y$ by conjugation. 
The stabilizer of $\gamma$ is $Z_\gamma(G(F_v))=\{g\in G(F_v)|g\gamma g^{-1}=\gamma\}$. 
Note $N+1=|Y|=|G(F_v)x|=|G(F_v)/Z_\gamma(G(F_v))|$. 
That is to say, $Z_\gamma(G(F_v))$ is a closed subgroup of $G(F_v)$ of finite index. 
Therefore $Z_\gamma(G(F_v))=G(F_v)$ by the connectedness.
As $G(F_v)$ is center-free, $\gamma=e$ and $G(F)$ is an ICC group. 
\end{proof}

\bibliographystyle{abbrv}
\typeout{}
\bibliography{MyLibrary} 

\textit{E-mail address}: \href{mailto:junyang@fas.harvard.edu}{junyang@fas.harvard.edu}\\

{Harvard University, Cambridge, MA 02138, USA}

\end{document}